    \numberwithin{equation}{section}
    \numberwithin{figure}{section}
    \theoremstyle{plain}
    \newtheorem{thm}{\protect\theoremname}
      \theoremstyle{plain}
      \newtheorem{lem}[thm]{\protect\lemmaname}
      \theoremstyle{plain}
      \theoremstyle{remark}
      \theoremstyle{plain}
      \newtheorem{conj}[thm]{\protect\conjname}
    \DeclareMathOperator{\B}{\mathcal{B}}
    \DeclarePairedDelimiter{\p}{(}{)}
    \DeclarePairedDelimiter{\floor}{\lfloor}{\rfloor}
    \algnewcommand\algorithmicto{\textbf{to}}
    \algnewcommand\To{\algorithmicto{} }
    \algnewcommand\algorithmicswitch{\textbf{switch}}
    \algnewcommand\algorithmiccase{\textbf{case}}
      \providecommand{\lemmaname}{Lemma}
      \providecommand{\corname}{Corollary}
      \providecommand{\remarkname}{Remark}
      \providecommand{\conjname}{Conjecture}
    \providecommand{\theoremname}{Theorem}
    \newcommand{\blfootnote}[1]{{\renewcommand{\thefootnote}{\roman{footnote}}\footnotetext[0]{#1}}}
\begin{document}
    
    \title{Extremal Graphs for a Spectral Inequality on Edge-Disjoint Spanning Trees}
    
    \author{Sebastian M. Cioab\u{a}${}^{1}$, Anthony Ostuni${}^{2 \dag}$, Davin Park${}^{2 \dag}$, Sriya Potluri${}^{2 \dag }$, Tanay Wakhare${}^{3 \ast}$, Wiseley Wong${}^{2}$}
 
    \begin{abstract}
     Liu, Hong, Gu, and Lai proved if the second largest eigenvalue of the adjacency matrix of graph $G$ with minimum degree $\delta \ge 2m+2 \ge 4$ satisfies $\lambda_2(G) < \delta - \frac{2m+1}{\delta+1}$, then $G$ contains at least $m+1$ edge-disjoint spanning trees, which verified a generalization of a conjecture by Cioab\u{a} and Wong. We show this bound is essentially the best possible by constructing $d$-regular graphs $\mathcal{G}_{m,d}$ for all $d \ge 2m+2 \ge 4$ with at most $m$ edge-disjoint spanning trees and $\lambda_2(\mathcal{G}_{m,d}) < d-\frac{2m+1}{d+3}$. As a corollary, we show that a spectral inequality on graph rigidity by Cioab\u{a}, Dewar, and Gu is essentially tight.
    \end{abstract}
    
    \maketitle
    
    
    \blfootnote{${}^{\dag}$ denotes joint first authorship}
    \blfootnote{${}^{\ast}$ denotes corresponding author, \email{twakhare@mit.edu}}
    \blfootnote{$^{1}$~University of Delaware, Newark, DE 19716, USA}
    \blfootnote{$^{2}$~University of Maryland, College Park, MD 20878, USA}
    \blfootnote{$^{3}$~Department of Electrical Engineering and Computer Science, MIT, Cambridge, MA 02139, USA}
    
    
    \section{Introduction}
     Let $G = (V,E)$ be a finite, simple graph on $n$ vertices, and let $\lambda_1 \ge \lambda_2 \ge \cdots \ge \lambda_n$ and $\mu_1 \le \mu_2 \le \cdots \le \mu_n$ be the eigenvalues of its adjacency and Laplacian matrices, respectively. Recall $\mu_i + \lambda_i = d$ for $d$-regular graphs \cite[Ch. 1]{brouwer2011spectra}. Additionally, let $\sigma(G)$ denote the maximum number of edge-disjoint spanning trees in $G$, sometimes referred to as the spanning tree packing number (see Palmer \cite{spanningpackingpalmer} for a survey of this parameter). Motivated by Kirchhoff's celebrated  matrix tree theorem on the number of spanning trees of a graph \cite{kirchhoff1847ueber} and a question of Seymour \cite{seymour}, Cioab\u{a} and Wong \cite{cioabaSpanningTrees} considered the relationship between the eigenvalues of a regular graph and $\sigma(G)$. 
     
     They obtained a result by combining two useful theorems. The Nash-Williams/Tutte theorem \cite{nash1961edge, tutte1961problem} (described in Section \ref{trees}) implies that if $G$ is a $(2m+2)$-edge-connected graph, then $\sigma(G) \ge m+1$. Additionally, Cioab\u{a} \cite{cioabua2010eigenvalues} showed if $G$ is a $d$-regular graph and $r$ is an integer with $2 \le r \le d$ such that $\lambda_2(G) < d - 2(r-1)/(d+1)$, then $G$ is $r$-edge-connected. These facts imply that if $G$ is a $d$-regular graph with $\lambda_2(G) < d - 2(2m+1)/(d+1)$ for some integer $m$, with $2 \le m+1 \le \lfloor d/2 \rfloor$, then $G$ contains $m+1$ edge-disjoint spanning trees. Cioab\u{a} and Wong conjectured the following factor of two improvement, which they verified for $m \in \{1,2\}$. 
     \begin{conj}[\cite{cioabaSpanningTrees}]
     Let $m \ge 1$ be an integer and $G$ be a $d$-regular graph with $d \ge 2m+2$. If $\lambda_2(G) < d - \frac{2m+1}{d+1}$, then $\sigma(G) \ge m+1$.
     \end{conj}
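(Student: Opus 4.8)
The plan is to argue by contradiction, with the Nash-Williams/Tutte theorem as the combinatorial engine and eigenvalue interlacing as the spectral one. Suppose $\sigma(G) \le m$. Then there is a partition $P = \{V_1, \dots, V_t\}$ of $V(G)$ with $e_G(P) \le (m+1)(t-1) - 1$, where $e_G(P)$ is the number of edges joining distinct parts; I would fix such a ``bad'' partition with $t$ minimum, first refining each part so that every $G[V_i]$ is connected (this does not change $e_G(P)$ and can only increase $t$). Since $\lambda_2(G) < d$, the graph $G$ is connected, so $t \ge 2$ and every $V_i$ has at least one outgoing edge. Write $d_i = e_G(V_i, V(G)\setminus V_i)$, so $\sum_i d_i = 2e_G(P)$.

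The next step is to harvest structure from minimality and $d$-regularity. First, $e_G(V_i,V_j) \le m$ for all $i \ne j$: otherwise merging $V_i$ and $V_j$ gives a bad partition with fewer parts. Second, any part $V_i$ with $d_i \le d-1$ satisfies $|V_i| \ge d+1$: as $G[V_i]$ has $\tfrac12(d|V_i| - d_i)$ edges, comparing with $\binom{|V_i|}{2}$ forces $d_i \ge |V_i|(d+1-|V_i|)$, whose right-hand side is at least $d$ whenever $1 \le |V_i| \le d$. Third, a double count against $\sum_i d_i \le 2(m+1)(t-1)-2$ shows at least two parts are \emph{light}, i.e.\ have $d_i \le 2m+1$; by the previous point each light part has at least $d+1$ vertices, so $d_i/|V_i| \le \tfrac{2m+1}{d+1}$.

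For the spectral contradiction I would invoke Haemers interlacing for the quotient matrix $Q$ of the adjacency matrix of $G$ with respect to a partition, which gives $\lambda_2(G) \ge \lambda_2(Q)$; equivalently, $d - \lambda_2(G)$ is bounded above by the Rayleigh quotient $\sum_{i<j} e_G(V_i,V_j)(y_i-y_j)^2 \big/ \sum_i |V_i| y_i^2$ for any $y$ with $\sum_i |V_i| y_i = 0$. The clean case is when two light parts $V_i, V_j$ are nonadjacent: taking $y_i = |V_j|$, $y_j = -|V_i|$, and $y_k = 0$ otherwise in the full-partition quotient, the Rayleigh quotient collapses to the weighted average $\tfrac{|V_j|}{|V_i|+|V_j|}\cdot\tfrac{d_i}{|V_i|} + \tfrac{|V_i|}{|V_i|+|V_j|}\cdot\tfrac{d_j}{|V_j|} \le \tfrac{2m+1}{d+1}$, contradicting the hypothesis. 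The same test vector (essentially) handles $t = 2$, where $e_G(V_1,V_2) \le m$ and both parts have $\ge d+1$ vertices, with room to spare.

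The remaining, and hardest, case is when the light parts pairwise touch; then, applying minimality to the union of light parts together with $e_G(V_i,V_j)\le m$, there are at most $2m+1$ of them, and $G$ consists of a few such nearly-complete clusters on about $d+1$ vertices plus many heavy parts glued along a sparse pattern --- exactly the regime of the extremal construction $\mathcal{G}_{m,d}$. I expect this to be the main obstacle: the naive single-cut interlacing bound only yields $\lambda_2(G) \ge d - \tfrac{2(2m+1)}{d+1}$, and even the global estimate $e_G(P) \ge \tfrac{d-\lambda_2}{2n}\big(n^2 - \sum_i |V_i|^2\big)$ falls just short of $(m+1)(t-1)$ when the parts have size close to $d+1$. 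Closing this factor-of-two gap is where the real work lies --- one must use a test vector adapted to the cluster-and-glue structure (either splitting $V(G)$ itself so the cut has at most $m$ edges with both sides of order $d+1$, or exploiting that each light cluster induces a subgraph whose top eigenvalue is nearly $d$ with a nearly uniform Perron vector, so that two such clusters span a plane on which the adjacency Rayleigh quotient stays above $d - \tfrac{2m+1}{d+1}$) and then verifying that the integrality and parity constraints on $d_i$ and $|V_i|$ leave precisely enough slack. This bookkeeping is presumably what put the general case out of reach of the elementary arguments that settled $m \le 2$.
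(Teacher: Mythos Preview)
This paper does not prove the statement you are addressing. The statement is recorded here as a \emph{conjecture} of Cioab\u{a} and Wong, and the paper immediately cites its resolution (Theorem~\ref{lambda2bound}) by Liu, Hong, Gu, and Lai \cite{noteOnTrees2014} without reproducing any argument. The paper's own contribution goes in the opposite direction: it constructs the graphs $\mathcal{G}_{m,d}$ to show the bound is essentially sharp. So there is no proof in the paper against which your proposal can be compared.

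As for the proposal itself, it is explicitly incomplete. Your setup via Nash-Williams/Tutte, the minimality refinements ($e_G(V_i,V_j)\le m$, light parts have at least $d+1$ vertices, at least two light parts exist), and the Rayleigh-quotient contradiction in the nonadjacent-light-parts and $t=2$ cases are all correct and standard. But you yourself flag that the decisive case --- when the light parts pairwise touch --- is not handled: you only offer heuristics about cluster-adapted test vectors and Perron approximations, and concede that the naive interlacing and global edge-count estimates ``fall just short.'' That is not a proof; it is a description of the obstruction. The factor-of-two gap you identify is exactly the content of the conjecture, and overcoming it is what required the nontrivial analysis in \cite{noteOnTrees2014}. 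Until you produce an actual argument for that case (a concrete test vector or quotient with a verified inequality, not a hoped-for one), the proposal remains a sketch of the easy cases together with an acknowledgment that the hard case is hard.
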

     
     This conjecture attracted much attention, leading to many partial results and generalizations \cite{guthesis, spanningpacking, gu2016edge, hong2016fractional, li2013edge, liu2014edge}. The question was ultimately resolved by Liu, Hong, Gu, and Lai. 
    \begin{thm}[\cite{noteOnTrees2014}] \label{lambda2bound}
    Let $m \ge 1$ be an integer and $G$ be a graph with minimum degree $\delta \ge 2m+2$. If $\lambda_2(G) < \delta - \frac{2m+1}{\delta+1}$, then $\sigma(G) \ge m+1$.
    \end{thm}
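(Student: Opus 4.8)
The plan is to argue by contradiction: turn a shortage of edge-disjoint spanning trees into a vertex partition with strong local structure via the Nash-Williams/Tutte theorem, and then collide that structure with the eigenvalue hypothesis through eigenvalue interlacing.

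Suppose $\sigma(G) \le m$. By the Nash-Williams/Tutte theorem there is a partition $\mathcal P = \{V_1, \ldots, V_t\}$ of $V(G)$ with $t \ge 2$ and $e_{\mathcal P} \le (m+1)(t-1) - 1$, where $e_{\mathcal P}$ counts edges joining distinct parts. Among all such partitions, choose one minimizing $\Phi(\mathcal P) := e_{\mathcal P} - (m+1)(t-1)$ (so $\Phi(\mathcal P) \le -1$) and, subject to that, maximizing $t$. Standard exchange arguments then pin down its structure. Refining a part $V_i$ by an internal partition $\mathcal Q$ changes $\Phi$ by $e_{\mathcal Q}(G[V_i]) - (m+1)(|\mathcal Q| - 1)$, so minimality forces $\sigma(G[V_i]) \ge m+1$ for every $i$; in particular $e(G[V_i]) \ge (m+1)(|V_i| - 1)$, which with $e(G[V_i]) \le \binom{|V_i|}{2}$ forces $n_i := |V_i| \ge 2m+2$. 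Splitting off a single vertex as its own part changes $\Phi$ by $\deg_{G[V_i]}(v) - (m+1)$, so every vertex has at least $m+1$ neighbours inside its own part (hence external degree at most $\deg_G(v) - (m+1)$). Merging two parts changes $\Phi$ by $(m+1) - e(V_i, V_j)$, so any two parts are joined by at most $m+1$ edges. Finally, with $b_i := e(V_i, V \setminus V_i)$ we have $\sum_i b_i = 2e_{\mathcal P} \le 2(m+1)(t-1) - 2$, so the boundary degrees $b_i$ average strictly below $2m+2$.

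The spectral ingredient is eigenvalue interlacing for the normalized quotient matrix of $\mathcal P$ (equivalently, the Laplacian Rayleigh quotient tested against vectors constant on each part). In the $d$-regular case that quotient has $d$ as its top eigenvalue, so a trace computation gives $\sum_i b_i/n_i \ge (t-1)(d - \lambda_2)$, and the hypothesis reads $d - \lambda_2 > \frac{2m+1}{d+1}$, so $\sum_i b_i/n_i > (t-1)\frac{2m+1}{d+1}$. The assumption $\delta \ge 2m+2$ now does two more jobs: combined with the internal density above it makes each $G[V_i]$ strictly exceed the spanning-tree threshold, and combined with the "internal-degree deficit is at most a constant" it forces any part with $b_i$ small to be large — a vertex of $V_i$ has external degree at least $\delta - (n_i - 1)$, so $b_i \ge n_i(\delta - n_i + 1)$, whence $b_i \le 2m+1$ already implies $n_i \ge \delta$. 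Thus the genuinely small parts are few and soak up most of the budget $\sum_i b_i$. Feeding the size bounds ($n_i \ge 2m+2$ in general, $n_i \ge \delta$ on the small-boundary parts) together with $\sum_i b_i \le 2(m+1)(t-1) - 2$ back into $\sum_i b_i/n_i$ should contradict the spectral lower bound, and hence the assumption $\sigma(G) \le m$. The factor-of-two gain over the route through $r$-edge-connectivity (Cioab\u{a}'s theorem) sits precisely here: the degree hypothesis is spent making each part internally rich, not making one cut large.

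I expect this last collision to be the main obstacle. A single-cut estimate, or the bare trace bound on the quotient, only recovers the weaker hypothesis $\lambda_2 < \delta - \frac{2(2m+1)}{\delta+1}$ that yields $(2m+2)$-edge-connectivity via Cioab\u{a}'s theorem; to gain the factor of two one must use all $t$ parts simultaneously, weighting the interlacing inequalities $\lambda_k(\text{quotient}) \le \lambda_2(G)$ ($k \ge 2$) against the part sizes with care, and dispose separately of the at most one part with more than $n/2$ vertices and of the parts that are small yet carry unusually large boundary. A secondary technical point is passing from $d$-regular graphs (the original conjecture of Cioab\u{a} and Wong) to general minimum degree $\delta$, where $\lambda_2$ of the adjacency matrix and the algebraic connectivity are no longer linked by an identity and an extra comparison estimate is required.
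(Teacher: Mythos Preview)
This theorem is not proved in the present paper at all: it is quoted as a result of Liu, Hong, Gu, and Lai \cite{noteOnTrees2014}, and the paper's contribution is the converse direction (Theorem~\ref{extremegraph}), constructing the extremal family $\mathcal{G}_{m,d}$ to show the bound is essentially sharp. So there is no ``paper's own proof'' to compare your proposal against.

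As for the proposal itself, it is a plausible outline of the Liu--Hong--Gu--Lai argument (Nash-Williams/Tutte plus quotient-matrix interlacing), but by your own account it is not a proof: you identify the key collision step as ``the main obstacle'' and leave open both the weighted interlacing estimate needed to gain the factor of two and the passage from regular to general minimum-degree graphs. In particular, the trace bound $\sum_i b_i/n_i \ge (t-1)(d-\lambda_2)$ you write down is specific to the $d$-regular case, and your sketch of how to close the gap (``feeding the size bounds \ldots\ back into $\sum_i b_i/n_i$ should contradict the spectral lower bound'') is a hope rather than an argument. If you want to reconstruct the actual proof, consult \cite{noteOnTrees2014} directly; the present paper does not supply it.
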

    
    We show this bound is essentially the best possible.
    \begin{thm}\label{extremegraph}
    For all $d \ge 2m+2 \ge 4$, the $d$-regular graph $\mathcal{G}_{m,d}$ (defined in Section \ref{construction})
    has at most $m$ edge-disjoint spanning trees 
    and satisfies $$d-\dfrac{2m+1}{d+1}\leq \lambda_2(\mathcal{G}_{m,d})< d-\dfrac{2m+1}{d+3}.$$
    
    \end{thm}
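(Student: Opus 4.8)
The plan is to assemble $\mathcal{G}_{m,d}$ out of a small, fixed number $k$ of vertex-disjoint near-complete graphs, glued so that the partition into these pieces witnesses via the Nash--Williams/Tutte theorem that $\sigma \le m$, while keeping as many connecting edges as that constraint allows so that $\lambda_2$ hugs its forced lower bound $d-\frac{2m+1}{d+1}$. Concretely, I would start from $k$ disjoint copies $B_1,\dots,B_k$ of $K_{d+1}$; for each selected pair I delete a small matching inside each of the two copies and join the freed half-edges by a matching running between the two copies, so that every vertex keeps degree exactly $d$. The total number $W$ of connecting (inter-copy) edges will be set to $W=(m+1)(k-1)-1$ and distributed among the pairs as evenly as the parity of the blob deficiencies allows; the choice $k=3$, so that $W=2m+1$, is the cleanest, and I expect it --- possibly together with minor adjustments to a few blob sizes to cover the residue classes of $m$ and $d$ for which the interfaces cannot be made equal --- to be essentially the authors' construction. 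By design $\mathcal{G}_{m,d}$ is then $d$-regular on about $k(d+1)$ vertices.

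For the tree-packing count, apply the Nash--Williams/Tutte theorem to the partition $\mathcal{P}=\{V(B_1),\dots,V(B_k)\}$: it has $k$ parts and exactly $e_{\mathcal{P}}=W=(m+1)(k-1)-1$ edges between distinct parts, so $e_{\mathcal{P}}<(m+1)(|\mathcal{P}|-1)$, and hence $\sigma(\mathcal{G}_{m,d})\le m$. Feeding $\sigma(\mathcal{G}_{m,d})<m+1$ into the contrapositive of Theorem~\ref{lambda2bound} (legitimate since $\mathcal{G}_{m,d}$ has minimum degree $d\ge 2m+2$) yields at once $\lambda_2(\mathcal{G}_{m,d})\ge d-\frac{2m+1}{d+1}$, which is the left-hand inequality for free. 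The entire remaining task is the strict upper bound $\lambda_2(\mathcal{G}_{m,d})<d-\frac{2m+1}{d+3}$.

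For that bound I would exploit the symmetry of the construction. The within-blob symmetries together with the symmetric group permuting the copies make the partition of $V(\mathcal{G}_{m,d})$ into the cells ``the ports of $B_i$ leading to a given neighbour $B_j$'' and ``the remaining (non-port) vertices of $B_i$'' equitable; the eigenvalues of the associated quotient matrix then form a sub-multiset of $\operatorname{spec}(\mathcal{G}_{m,d})$, and under the group action this quotient block-diagonalizes, peeling off the trivial eigenvalue $d$ and leaving one small block (a $2\times2$ or $3\times3$ matrix --- illustratively, a single connecting-matching of $t$ edges between two copies of $K_{d+1}$ already produces an antisymmetric block with largest eigenvalue $\tfrac12\bigl(d-3+\sqrt{(d+3)^2-8t}\bigr)$) whose top root I would evaluate in closed form, the ``$d+3$'' and the total interface size emerging from its discriminant, and compare against $d-\frac{2m+1}{d+3}$. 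Every eigenvector not arising from this quotient sums to zero on each cell, so it is supported inside the complete graphs $K_{d+1}$ and the ``$K_{d+1}$ minus a matching'' graphs composing the blobs, whose non-Perron adjacency eigenvalues are small (a bounded set of nonpositive values), well below $\lambda_2$; these therefore do not interfere, and $\lambda_2(\mathcal{G}_{m,d})$ equals the relevant quotient eigenvalue. The main obstacle is precisely this spectral step: pinning down which block of the quotient realizes $\lambda_2$, evaluating its largest root cleanly enough to certify the strict inequality uniformly for all $d\ge 2m+2\ge 4$, and carrying out the case bookkeeping in the residue classes where the connecting edges (or a few blob sizes) cannot be taken equal.
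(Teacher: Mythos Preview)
Your high-level scaffold is right and matches the paper on two counts: the Nash--Williams/Tutte partition into the blobs certifies $\sigma\le m$, and the contrapositive of Theorem~\ref{lambda2bound} then gives the lower bound $\lambda_2\ge d-\frac{2m+1}{d+1}$ for free. Where your proposal departs from the paper is in the construction itself and, consequently, in the entire spectral analysis.

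The paper's $\mathcal{G}_{m,d}$ is not built from $k=3$ blobs but from $k=2m+1$ copies of $K_{d+1}$, each with a deleted matching of size $m$, wired together in a \emph{circulant} pattern so that exactly one edge runs between every pair of copies. Your guess that $k=3$ ``is essentially the authors' construction'' is therefore incorrect; with three blobs and $2m+1$ inter-blob edges you generically lose the cyclic symmetry (since $3\nmid 2m+1$ in most cases), so the equitable-partition reduction you sketch would not collapse to a single clean $2\times 2$ or $3\times 3$ block anyway.

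Because the paper takes $k=2m+1$, the block-circulant diagonalization does not produce a fixed-size quotient: each $(2m+1)$-th root of unity $\zeta$ contributes a $(d+1)\times(d+1)$ block $\mathbf{H}_\zeta$, and after Schur-complementing away the non-port vertices one is left with a factor that is a degree-$n$ polynomial in $z=\frac{x+1}{2}$ where $n\mid 2m+1$. The paper identifies these factors explicitly in terms of Chebyshev polynomials (Theorem~\ref{charpoly}), obtaining $f_n(z)=\bigl(-2m+1+\frac{2m-d}{z}\bigr)T_n(z)+(2m+1)(z-1)U_{n-1}(z)$, and the upper bound on $\lambda_2$ is then extracted not by reading off a quadratic root but by applying Graeffe's method (Lemma~\ref{Graeffe}) to the top five coefficients of $f_n$, followed by an elementary inequality (Lemma~\ref{rootbound}) comparing the resulting fourth-root expression to $d-\frac{2m+1}{d+3}$. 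So the ``small block whose top root I would evaluate in closed form'' step is precisely where the real work lies, and it is substantially heavier than your proposal anticipates: there is no closed-form root, only a coefficient-based bound on the largest root of a polynomial whose degree grows with $m$.

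In short: your plan for $\sigma\le m$ and the lower eigenvalue bound is exactly what the paper does, but the upper-bound step is a genuine gap. You would need to (i) replace $k=3$ by the circulant $k=2m+1$ construction, (ii) carry the block-circulant determinant computation through to the Chebyshev form, and (iii) supply a root-bounding argument (the paper uses Graeffe's method) that handles all $n\mid 2m+1$ uniformly, rather than a single small quotient matrix.
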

    
    Cioab\u{a} and Wong created special cases of this construction for the families $\mathcal{G}_{1,d}$ and (a slight variant of) $\mathcal{G}_{2,d}$ in \cite{cioabaSpanningTrees} to show that Theorem \ref{lambda2bound} is essentially best possible for $m \in \{1,2\}$. In his PhD thesis \cite{wongthesis}, Wong also constructed the family $\mathcal{G}_{3,d}$ to show that Theorem \ref{lambda2bound} is essentially tight for $m=3$. Based on the family of graphs for the small cases of $m$ that appeared in \cite{cioabaSpanningTrees}, Gu \cite{spanningpacking} constructed a family of multigraphs by replacing every edge with multiple edges to show that the bounds in a multigraph analog of Theorem \ref{lambda2bound} are also the best possible. Additionally, Cioab\u{a}, Dewar, and Gu \cite{cioabua2020spectral} used the variant of $\mathcal{G}_{2,d}$ from \cite{cioabaSpanningTrees} to show that a sufficient spectral condition for graph rigidity is essentially the best possible. We generalize their result in Section \ref{rigidity}.

    In Section \ref{constructionapplication}, we will construct the family of graphs $\mathcal{G}_{m,d}$ and prove the lower bound of Theorem \ref{extremegraph}. In Section \ref{polynomial}, we will explicitly describe the characteristic polynomial of $\mathcal{G}_{m,d}$ (Theorem \ref{charpoly}), and in Section \ref{bound}, we will use the characteristic polynomial to prove the upper eigenvalue bound of Theorem \ref{extremegraph}. The proof of the second eigenvalue bound uses a classical number theoretic technique, Graeffe's method (see Lemma \ref{Graeffe}), which to the best of our knowledge has not previously been used for second eigenvalue bounds. This approach should generalize to upper bounds on the roots of interesting combinatorial polynomials. 

    
    \section{Graph Construction} \label{constructionapplication}
    
    \subsection{Construction}\label{construction}
 We construct a family of graphs $\mathcal{G}_{m,d}$ such that $\lambda_2(\mathcal{G}_{m,d})<d-\frac{2m-1}{d+3}$, but $\sigma(\mathcal{G}_{m,d})\le m$ for all $d\geq 2m+2\geq 4$.  The graph $\mathcal{G}_{m,d}$ contains $2m+1$ copies of $K_{d+1}$, each with a deleted matching of size $m$.  Then $m(2m+1)$ edges are added in a circulant manner to connect the vertices among the $2m+1$ cliques with the deleted matchings.

         Let  $d\geq 2m+2\geq 4$. The vertex set of $G=\mathcal{G}_{m,d}$ consists of all ordered pairs $(i,j)$ where $0\leq i\leq 2m$ and $0\leq j\leq d$. Let $H_i=\{(i,j)\mid 0\leq j\leq d\}$, and let the subgraph induced by $H_i$ be  $G[H_i]= K_{d+1}\setminus E_i$, where 
    $$E_i=\{(i,2a-2)\sim (i,2a-1) \mid 1\le a\le m\}.$$
    
    Now we connect edges among the  $H_i$. Let
    $$E'=\{(i,2j+1)\sim (i+j+1,2j) \mid  0\leq i\leq 2m,\: 0\leq j\leq m-1\},$$
    where $i+j+1$ is taken modulo $2m+1$.
    Then 
    $$E(\mathcal{G}_{m,d}) =\bigcup_{i=0}^{2m}E(G[H_i])\cup E'.$$
 This construction makes $\mathcal{G}_{m,d}$ a connected $d$-regular graph.  See Figures \ref{treepic1} and \ref{treepic2}.
 
 \subsection{Spanning Trees}\label{trees}
    Our result, like many prior results on edge-disjoint spanning trees, crucially relies on a theorem from Nash-Williams and Tutte, which converts a condition on $\sigma(G)$ to one on vertex partitions. If the vertex set $V(G)$ is partitioned into disjoint sets $V_1,\ldots,V_t$, then let $e(V_i,V_j)$ be the number of edges with endpoints in both $V_i$ and $V_j$.
    
    \begin{thm}[Nash-Williams/Tutte \cite{nash1961edge, tutte1961problem}]{\label{nash}} Let $G$ be a connected graph and $k > 0$ be an integer. Then $\sigma(G) \ge k$ if and only if $\sum_{1\leq i < j \leq t}e(V_i,V_j) \ge k(t-1)$ for any partition $V(G) = V_1 \cup \cdots \cup V_t$.
    \end{thm}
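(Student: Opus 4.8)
The plan is to prove the two implications of the biconditional separately. The forward direction --- that $\sigma(G)\ge k$ forces the partition inequality --- is a short contraction argument: if $T_1,\dots,T_k$ are pairwise edge-disjoint spanning trees of $G$ and $V_1\cup\cdots\cup V_t$ is any partition of $V(G)$, then contracting each block $V_i$ to a single vertex turns each $T_\ell$ into a connected spanning subgraph of the resulting $t$-vertex graph, so $T_\ell$ contains at least $t-1$ edges joining distinct blocks; since the $T_\ell$ are edge-disjoint these $k(t-1)$ crossing edges are all distinct, giving $\sum_{1\le i<j\le t}e(V_i,V_j)\ge k(t-1)$.

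For the backward direction I would invoke the matroid union theorem of Nash-Williams and Edmonds. Let $M=M(G)$ be the graphic matroid of $G$ on ground set $E(G)$ with rank function $r$; for connected $G$ on $n$ vertices one has $r(E(G))=n-1$ and $r(F)=n-c(F)$ for every $F\subseteq E(G)$, where $c(F)$ is the number of components of the spanning subgraph $(V(G),F)$. The matroid union theorem gives that the maximum size of an independent set of the union $M^{\vee k}$ of $k$ copies of $M$ equals $\min_{F\subseteq E(G)}\bigl(|E(G)\setminus F|+k\,r(F)\bigr)$. Now $\sigma(G)\ge k$ is equivalent to $E(G)$ containing $k$ pairwise disjoint bases of $M$, hence to $M^{\vee k}$ possessing an independent set of size $k(n-1)$: such a set decomposes into $k$ forests of total size $k(n-1)$, each of size at most $n-1$, so each is a spanning tree. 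Since $F=E(G)$ makes the displayed minimum at most $k(n-1)$, it suffices to prove $|E(G)\setminus F|+k\,r(F)\ge k(n-1)$, equivalently $|E(G)\setminus F|\ge k\bigl(c(F)-1\bigr)$, for every $F\subseteq E(G)$.

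This last inequality is exactly where the hypothesis of the theorem enters. Fix $F\subseteq E(G)$ and let $\mathcal P_F=\{V_1,\dots,V_t\}$ be the partition of $V(G)$ into the components of $(V(G),F)$, so that $t=c(F)$. Every edge joining two distinct blocks of $\mathcal P_F$ lies outside $F$, so $|E(G)\setminus F|\ge\sum_{1\le i<j\le t}e(V_i,V_j)$, and applying the hypothesis to the partition $\mathcal P_F$ bounds the right-hand side below by $k(t-1)=k\bigl(c(F)-1\bigr)$, which is what we wanted.

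The single nontrivial ingredient is the min-max formula for $M^{\vee k}$, i.e.\ the matroid union theorem itself, and I expect this to be the main obstacle: one either cites it as a black box (in the same spirit as the paper's use of Nash-Williams/Tutte) or derives it from the matroid intersection theorem. A matroid-free route is Tutte's original exchange argument --- begin with a maximum collection of edge-disjoint forests and repeatedly perform edge swaps; a configuration admitting no further improvement exhibits precisely the partition $\mathcal P_F$ above, so it amounts to the same proof in combinatorial language and I would not expect it to be shorter.
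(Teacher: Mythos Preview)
Your argument is correct and is one of the standard modern proofs of the Nash-Williams/Tutte theorem: the forward direction is the elementary contraction count, and the backward direction is the specialisation of the Edmonds/Nash-Williams matroid union formula to the graphic matroid, which is exactly how most contemporary textbooks derive the result. The one place to be slightly more careful is the sentence ``such a set decomposes into $k$ forests of total size $k(n-1)$, each of size at most $n-1$, so each is a spanning tree'': this uses that an independent set of $M^{\vee k}$ is, \emph{by definition}, a disjoint union of $k$ independent sets of $M$, which is immediate but worth making explicit.

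However, note that the paper does not give its own proof of this statement at all: Theorem~\ref{nash} is quoted as a classical result with the citations \cite{nash1961edge, tutte1961problem} and is then used as a black box to bound $\sigma(\mathcal{G}_{m,d})$. So there is no ``paper's proof'' to compare against; you have supplied a proof where the paper simply cites one. If the intent is merely to use Nash-Williams/Tutte as in the paper, your write-up is more than is needed; if the intent is to include a self-contained proof, what you have is fine, with the caveat that invoking the matroid union theorem is itself a nontrivial citation of comparable depth to citing Nash-Williams/Tutte directly.
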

    
    As in the previous subsection, let $H_1, \ldots, H_{2m+1}$ be the modified cliques $K_{d+1}$ of $\mathcal{G}_{m,d}$. Since $e(H_i,H_j)=1$, we have
    
    \[\sum_{0\leq i<j\leq 2m}e(H_i,H_j)=\dfrac{2m(2m+1)}{2}<(m+1)(2m).\]
    
    By Theorem \ref{nash}, $\mathcal{G}_{m,d}$ has at most $m$ edge-disjoint spanning trees.  Then Theorem \ref{lambda2bound} implies $\lambda_2(\mathcal{G}_{m,d})\ge d-\frac{2m+1}{d+1}$, yielding the lower bound of Theorem \ref{extremegraph}. It remains to show   $\lambda_2(\mathcal{G}_{m,d})<d-\frac{2m+1}{d+3}$. 

    \begin{figure}[H]
        \centering
        \includegraphics[scale=0.75]{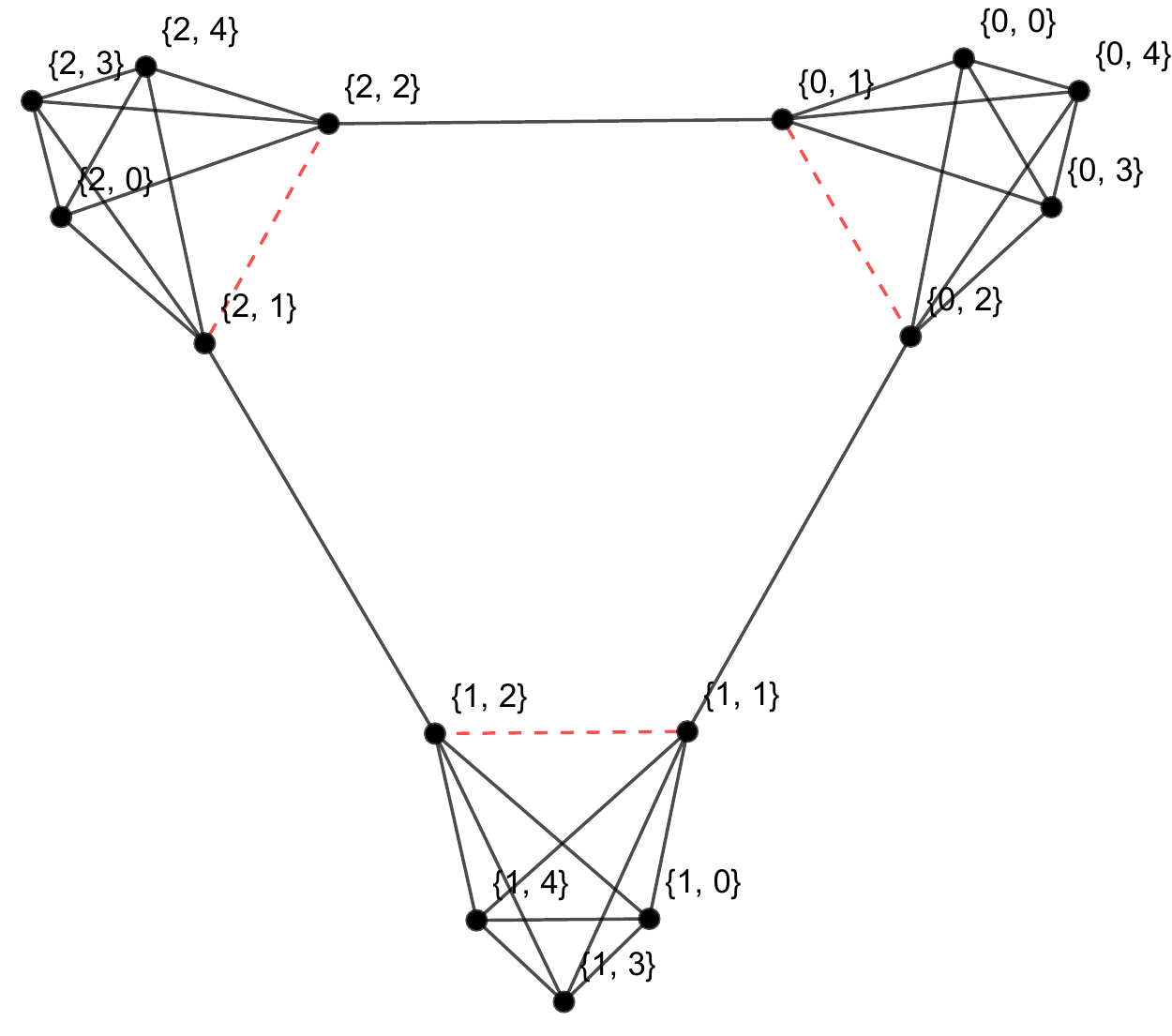}
        \caption{$\mathcal{G}_{1,4}$ with deleted dashed edges and labeled vertices.}
        {\label{treepic1}}
    \end{figure}

    
    \begin{figure}[H]
        \centering
        \includegraphics[scale=0.75]{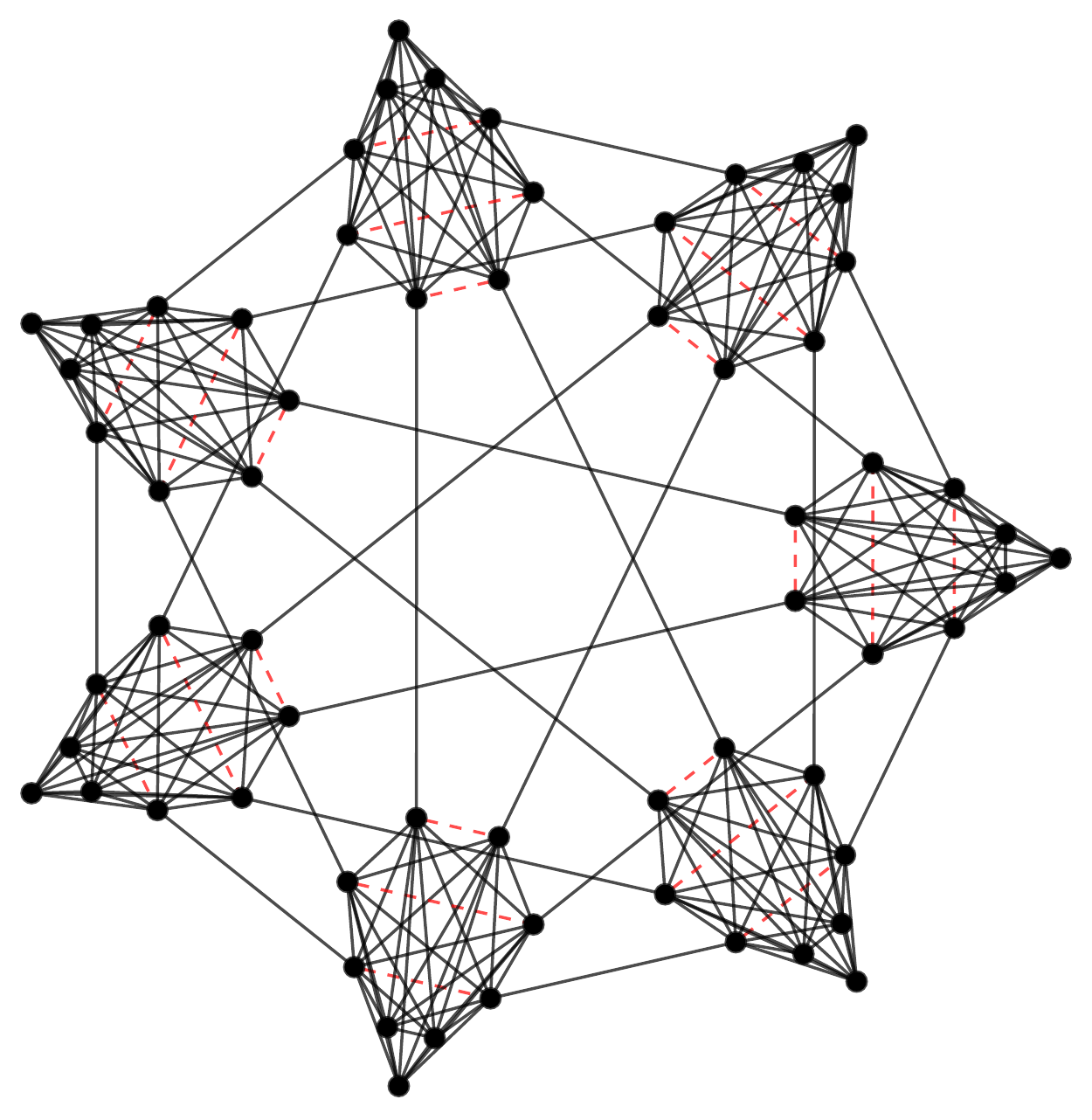}
        \caption{$\mathcal{G}_{3,8}$ with deleted dashed edges.}
        {\label{treepic2}}
    \end{figure}
    
    \section{Characteristic Polynomial} \label{polynomial}
    
The  adjacency matrix of $\mathcal{G}_{m,d}$ is a block circulant matrix. Following \cite{blockCirculantEvalues}, define $\B(\mathbf{b}_0,\mathbf{b}_1,\ldots,\mathbf{b}_{n-1})$ to be the block circulant matrix
    \[\begin{bmatrix}
        \mathbf{b}_0 & \mathbf{b}_1 & \mathbf{b}_2 & \ddots & \mathbf{b}_{n-2} & \mathbf{b}_{n-1}\\
        \mathbf{b}_{n-1} & \mathbf{b}_0 & \mathbf{b}_1 & \ddots & \mathbf{b}_{n-3} & \mathbf{b}_{n-2}\\
        \mathbf{b}_{n-2} & \mathbf{b}_{n-1} & \mathbf{b}_0 & \ddots & \mathbf{b}_{n-4} & \mathbf{b}_{n-3}\\
        \ddots & \ddots & \ddots & \ddots & \ddots & \ddots\\
        \mathbf{b}_2 & \mathbf{b}_3 & \mathbf{b}_4 & \ddots & \mathbf{b}_0 & \mathbf{b}_1\\
        \mathbf{b}_1 & \mathbf{b}_2 & \mathbf{b}_3 & \ddots & \mathbf{b}_{n-1} & \mathbf{b}_0
    \end{bmatrix},\]
    where each $\mathbf{b}_i$ is a square matrix of equal dimension.
    
    \begin{lem}[\cite{blockCirculantEvalues}]\label{blocklemma}
    The characteristic polynomial of a real, symmetric, block circulant matrix $\B(\mathbf{b}_0,\mathbf{b}_1,\ldots,\mathbf{b}_{n-1})$ is given by $$\det(xI - \B(\mathbf{b}_0,\mathbf{b}_1,\ldots,\mathbf{b}_{n-1})) = \prod_{\zeta^n=1} \det(xI-\mathbf{H}_\zeta),$$
        where
    \[\mathbf{H}_\zeta=\mathbf{b}_0+\zeta \mathbf{b}_1 + \cdots + \zeta^{n-1}\mathbf{b}_{n-1},\]
    and $\zeta$ runs over the $n$th roots of unity (including $1$).
    \end{lem}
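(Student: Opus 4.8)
The plan is to produce a single unitary change of basis that block-diagonalizes \emph{every} block circulant matrix at once, and then read off the determinant. Let $\Pi$ denote the $n\times n$ cyclic permutation matrix with $\Pi_{i,j}=1$ exactly when $j\equiv i+1\pmod n$, and let $p$ be the common size of the blocks $\mathbf{b}_i$. The first step is the identity
\[
\B(\mathbf{b}_0,\mathbf{b}_1,\ldots,\mathbf{b}_{n-1})=\sum_{k=0}^{n-1}\Pi^{k}\otimes \mathbf{b}_k,
\]
which one checks by noting that the $(i,j)$ block of the right-hand side is $\mathbf{b}_{j-i \bmod n}$, matching the displayed matrix row by row.

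Next I would diagonalize $\Pi$ by the discrete Fourier transform. With $\omega=e^{2\pi i/n}$ and $F$ the unitary matrix $F_{k\ell}=n^{-1/2}\omega^{k\ell}$, a one-line computation on the Fourier basis vectors gives $\Pi=F D F^{*}$ where $D=\mathrm{diag}(1,\omega,\ldots,\omega^{n-1})$, hence $\Pi^{k}=FD^{k}F^{*}$. Conjugating by the unitary $F\otimes I_p$ and applying the mixed-product property $(A\otimes B)(C\otimes D)=(AC)\otimes(BD)$ twice yields
\[
(F^{*}\otimes I_p)\,\B(\mathbf{b}_0,\ldots,\mathbf{b}_{n-1})\,(F\otimes I_p)=\sum_{k=0}^{n-1}D^{k}\otimes \mathbf{b}_k,
\]
and the right-hand side is block-diagonal with $j$th diagonal block $\sum_{k}\omega^{jk}\mathbf{b}_k=\mathbf{H}_{\omega^{j}}$.

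Finally, since conjugation by an invertible matrix leaves the characteristic polynomial unchanged and the determinant of a block-diagonal matrix factors over its blocks,
\[
\det\!\bigl(xI-\B(\mathbf{b}_0,\ldots,\mathbf{b}_{n-1})\bigr)=\prod_{j=0}^{n-1}\det\!\bigl(xI-\mathbf{H}_{\omega^{j}}\bigr)=\prod_{\zeta^{n}=1}\det\!\bigl(xI-\mathbf{H}_{\zeta}\bigr),
\]
which is the claim. I do not expect a genuine obstacle: the only delicate point is the sign convention in $F$, which with the opposite choice produces $\mathbf{H}_{\bar\zeta}$ rather than $\mathbf{H}_{\zeta}$ in the product, but complex conjugation permutes the $n$th roots of unity, so the two products coincide and the statement is convention-free. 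It is also worth remarking that the reality and symmetry of $\B$ play no role in this determinant identity; they are needed only to guarantee that both sides are polynomials with real coefficients and real roots, which is what the later application uses.
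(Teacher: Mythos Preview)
Your argument is correct and is the standard block-diagonalization of a block circulant by the discrete Fourier transform; the identification $\B=\sum_k\Pi^k\otimes\mathbf{b}_k$, the diagonalization $\Pi=FDF^{*}$, and the resulting block-diagonal form with blocks $\mathbf{H}_{\omega^j}$ are all checked correctly, and your remarks about the irrelevance of the sign convention and of the reality/symmetry hypotheses are accurate. Note that the paper does not give its own proof of this lemma but simply cites it from \cite{blockCirculantEvalues}, so there is no in-paper argument to compare against; your proof is exactly the one a reader would supply (and essentially the one in the cited reference).
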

    
    To determine the characteristic polynomial of $\mathcal{G}_{m,d}$, we will also need the following lemmas from linear algebra.     Let $\mathbf{I}_n$ and $\mathbf{J}_n$ denote the identity matrix and all ones matrix of dimension $n$, respectively.
    
    \begin{lem}\label{matrix_det_lemma}
        Let $\mathbf{A}$ be an invertible matrix and $\mathbf{u},\mathbf{v}$ be column vectors. Then,
        \[\det(\mathbf{A}+\mathbf{uv}^T)=(1+\mathbf{v}^T\mathbf{A}^{-1}\mathbf{u})\det(\mathbf{A}).\]
    \end{lem}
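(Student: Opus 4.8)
The plan is to reduce the statement to the special case $\mathbf{A} = \mathbf{I}_n$ by factoring out $\mathbf{A}$. Since $\mathbf{A}$ is invertible, $\mathbf{A}+\mathbf{u}\mathbf{v}^T = \mathbf{A}\bigl(\mathbf{I}_n + \mathbf{A}^{-1}\mathbf{u}\mathbf{v}^T\bigr)$, so multiplicativity of the determinant gives $\det(\mathbf{A}+\mathbf{u}\mathbf{v}^T) = \det(\mathbf{A})\,\det(\mathbf{I}_n + \mathbf{w}\mathbf{v}^T)$ with $\mathbf{w} := \mathbf{A}^{-1}\mathbf{u}$. It then suffices to prove the rank-one identity $\det(\mathbf{I}_n + \mathbf{w}\mathbf{v}^T) = 1 + \mathbf{v}^T\mathbf{w}$, after which substituting back $\mathbf{w}=\mathbf{A}^{-1}\mathbf{u}$ finishes the proof.

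For the rank-one identity I would argue via eigenvalues: the matrix $\mathbf{w}\mathbf{v}^T$ has rank at most $1$, hence at least $n-1$ of its eigenvalues equal $0$, and the remaining eigenvalue is its trace $\operatorname{tr}(\mathbf{w}\mathbf{v}^T) = \mathbf{v}^T\mathbf{w}$ (indeed, when $\mathbf{v}^T\mathbf{w}\neq 0$ the relation $(\mathbf{w}\mathbf{v}^T)\mathbf{w} = (\mathbf{v}^T\mathbf{w})\mathbf{w}$ exhibits $\mathbf{w}$ as the associated eigenvector, and when $\mathbf{v}^T\mathbf{w}=0$ the matrix $\mathbf{w}\mathbf{v}^T$ is nilpotent so all its eigenvalues vanish). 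Therefore $\mathbf{I}_n + \mathbf{w}\mathbf{v}^T$ has eigenvalues $1$ with multiplicity $n-1$ and $1 + \mathbf{v}^T\mathbf{w}$ with multiplicity $1$, and its determinant, the product of the eigenvalues, is $1 + \mathbf{v}^T\mathbf{w}$.

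Alternatively, and avoiding any case distinction, one can use a Schur complement computation: form the $(n+1)\times(n+1)$ block matrix $M = \begin{pmatrix} \mathbf{A} & \mathbf{u} \\ -\mathbf{v}^T & 1 \end{pmatrix}$ and evaluate $\det M$ in two ways. Eliminating the invertible $(1,1)$-block $\mathbf{A}$ gives $\det M = \det(\mathbf{A})\bigl(1 + \mathbf{v}^T\mathbf{A}^{-1}\mathbf{u}\bigr)$, while eliminating the $(2,2)$-block (the scalar $1$) gives $\det M = \det(\mathbf{A} + \mathbf{u}\mathbf{v}^T)$; equating the two expressions yields the lemma at once. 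There is no genuine obstacle in either route: the only hypothesis used is the invertibility of $\mathbf{A}$, which is assumed, and the sole subtlety in the eigenvalue argument (the degenerate case $\mathbf{v}^T\mathbf{w}=0$) is handled above and is entirely sidestepped by the Schur complement version, so I would present that one.
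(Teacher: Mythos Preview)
Your proof is correct; both the eigenvalue argument and the Schur complement argument are standard and valid, and either one suffices. Note that the paper does not actually supply its own proof of this lemma: it is stated as a well-known linear algebra fact (the matrix determinant lemma) and used without justification, so there is no paper proof to compare against.
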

    
    \begin{lem}\label{ai+bj}
   We have
    \begin{align*}
        \det(a\mathbf{I}_n+b\mathbf{J}_n)&=a^n+na^{n-1}b,\\
        (a\mathbf{I}_n+b\mathbf{J}_n)^{-1}&=\frac{1}{a}\mathbf{I}_n-\frac{b}{a(a+nb)}\mathbf{J}_n.
    \end{align*}
    \end{lem}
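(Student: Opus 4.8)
The plan is to prove the two assertions by elementary linear algebra, exploiting that $\mathbf{J}_n=\mathbf{1}\mathbf{1}^{T}$ has rank one, where $\mathbf{1}$ denotes the all-ones column vector; in particular $\mathbf{J}_n^{2}=n\mathbf{J}_n$ and $\mathbf{1}^{T}\mathbf{1}=n$.

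For the determinant, I would apply Lemma~\ref{matrix_det_lemma} with $\mathbf{A}=a\mathbf{I}_n$ (invertible whenever $a\neq 0$), $\mathbf{u}=b\mathbf{1}$, and $\mathbf{v}=\mathbf{1}$, so that $\mathbf{u}\mathbf{v}^{T}=b\mathbf{J}_n$. Since $\mathbf{A}^{-1}=\tfrac1a\mathbf{I}_n$ and $\mathbf{v}^{T}\mathbf{A}^{-1}\mathbf{u}=\tfrac{b}{a}\mathbf{1}^{T}\mathbf{1}=\tfrac{nb}{a}$, the lemma yields $\det(a\mathbf{I}_n+b\mathbf{J}_n)=\bigl(1+\tfrac{nb}{a}\bigr)\det(a\mathbf{I}_n)=\bigl(1+\tfrac{nb}{a}\bigr)a^{n}=a^{n}+na^{n-1}b$. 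Both sides being polynomials in $a$ and $b$ that agree on the dense set $\{a\neq 0\}$, the identity holds in general. (Equivalently, one may note that $\mathbf{J}_n$ has eigenvalue $n$ with multiplicity $1$ and $0$ with multiplicity $n-1$, so $a\mathbf{I}_n+b\mathbf{J}_n$ has eigenvalues $a+nb$ once and $a$ with multiplicity $n-1$, whose product is the claimed determinant.)

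For the inverse, rather than quoting the Sherman--Morrison formula I would simply verify the stated identity directly. Using $\mathbf{J}_n^{2}=n\mathbf{J}_n$, one expands $(a\mathbf{I}_n+b\mathbf{J}_n)\bigl(\tfrac1a\mathbf{I}_n-\tfrac{b}{a(a+nb)}\mathbf{J}_n\bigr)=\mathbf{I}_n+\Bigl(\tfrac{b}{a}-\tfrac{b}{a+nb}-\tfrac{nb^{2}}{a(a+nb)}\Bigr)\mathbf{J}_n$, and the coefficient of $\mathbf{J}_n$ equals $\tfrac{b(a+nb)-ab-nb^{2}}{a(a+nb)}=0$, so the product is $\mathbf{I}_n$; the analogous computation on the other side (or symmetry of both factors) confirms it is a genuine two-sided inverse.

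There is no real obstacle here: both steps are routine, and the only care needed is bookkeeping the nonvanishing of $a$ and $a+nb$ — precisely the conditions under which the determinant is nonzero and the inverse exists, and in every application of this lemma these quantities will be nonzero.
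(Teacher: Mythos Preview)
Your proof is correct. The paper actually states Lemma~\ref{ai+bj} without proof, treating it as a standard linear-algebra fact, so there is no argument to compare against. Your approach---using Lemma~\ref{matrix_det_lemma} (equivalently, the rank-one update/eigenvalue observation) for the determinant and a direct multiplication check for the inverse---is exactly the kind of elementary verification one would expect, and the bookkeeping on the $\mathbf{J}_n$-coefficient is accurate. The remark about needing $a\neq 0$ and $a+nb\neq 0$ for the inverse to exist is appropriate and matches how the lemma is applied later in the paper.
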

    
    Finally, the characteristic polynomial of $\mathcal{G}_{m,d}$ will require defining the \textit{Chebyshev polynomials of the first kind} $T_n(z)$ and \textit{Chebyshev polynomials of the second kind} $U_n(z)$. We have \cite[p. 775, Equations (22.3.6) and (22.3.7)]{Abramowitz}
    \begin{align}
     T_n(z) &= \frac{n}{2}\sum_{k=0}^{\floor{n/2} } \frac{(-1)^k}{n-k}\binom{n-k}{k}(2z)^{n-2k}, \label{chebytsum}\\
     U_n(z) &= \sum_{k=0}^{\floor{n/2} } {(-1)^k}\binom{n-k}{k}(2z)^{n-2k}.\label{chebyusum}
    \end{align}
    They are also given by the implicit equations $T_n(\cos \theta) = \cos \p*{n \theta}$ and $U_n(\cos \theta)\sin \theta = \sin ((n+1)\theta)$.
    We prove a few lemmas on the Chebyshev polynomials and their connection to roots of unity.
    
    \begin{lem}\label{chebyprod}
    For $m\geq 1$,  $$ \frac{T_{2m+1}(z)}{z} = 2^{2m}\prod_{j=1}^{m} \left(z^2 - \sin^2 \left( \dfrac{\pi j}{2m+1}  \right)\right).$$
    \end{lem}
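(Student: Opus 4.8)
The plan is to prove the identity by comparing the zeros and leading coefficients of the two sides, viewed as polynomials in $z$. The left-hand side $T_{2m+1}(z)/z$ is a genuine polynomial: by \eqref{chebytsum}, $T_{2m+1}(z)$ has only odd-degree terms, so dividing by $z$ produces a polynomial of degree $2m$ whose leading coefficient is the leading coefficient of $T_{2m+1}$, namely $2^{2m}$. The right-hand side is also a polynomial of degree $2m$ with leading coefficient $2^{2m}$. Hence it suffices to show that the $2m$ roots (with multiplicity) of $T_{2m+1}(z)/z$ are exactly $\pm\sin\!\left(\tfrac{\pi j}{2m+1}\right)$ for $1\le j\le m$.

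First I would locate the roots of $T_{2m+1}$ using the trigonometric description $T_{2m+1}(\cos\theta)=\cos((2m+1)\theta)$. Setting $\cos((2m+1)\theta)=0$ gives $(2m+1)\theta = \tfrac{\pi}{2}+\pi k$, i.e. $\theta = \tfrac{\pi(2k+1)}{2(2m+1)}$ for $k=0,1,\dots,2m$, producing the $2m+1$ distinct values $z=\cos\!\left(\tfrac{\pi(2k+1)}{2(2m+1)}\right)$; since $\deg T_{2m+1}=2m+1$ these are all its roots, and they are simple. The root $k=m$ gives $\theta=\pi/2$, i.e. $z=0$, which is the factor we divide out. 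The remaining $2m$ roots come in $\pm$ pairs because $\cos\!\left(\pi-\tfrac{\pi(2k+1)}{2(2m+1)}\right)=-\cos\!\left(\tfrac{\pi(2k+1)}{2(2m+1)}\right)$, and I would rewrite each such root using the complementary-angle identity $\cos\!\left(\tfrac{\pi(2k+1)}{2(2m+1)}\right) = \sin\!\left(\tfrac{\pi}{2}-\tfrac{\pi(2k+1)}{2(2m+1)}\right) = \sin\!\left(\tfrac{\pi(2m-2k)}{2(2m+1)}\right)=\sin\!\left(\tfrac{\pi(m-k)}{2m+1}\right)$. As $k$ ranges over $\{0,1,\dots,m-1\}$, $j:=m-k$ ranges over $\{1,\dots,m\}$, so the nonzero roots are precisely $\pm\sin\!\left(\tfrac{\pi j}{2m+1}\right)$, $1\le j\le m$, all distinct and simple.

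Putting this together: $T_{2m+1}(z)/z$ is a degree-$2m$ polynomial with leading coefficient $2^{2m}$ and with exactly the simple roots $\pm\sin\!\left(\tfrac{\pi j}{2m+1}\right)$, $j=1,\dots,m$, hence it equals $2^{2m}\prod_{j=1}^m\bigl(z-\sin(\tfrac{\pi j}{2m+1})\bigr)\bigl(z+\sin(\tfrac{\pi j}{2m+1})\bigr) = 2^{2m}\prod_{j=1}^m\bigl(z^2-\sin^2(\tfrac{\pi j}{2m+1})\bigr)$, as claimed.

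The only genuinely delicate point is bookkeeping: making sure the $2m+1$ angles $\theta$ really do give $2m+1$ distinct cosine values (they do, since they lie in $(0,\pi)$ where $\cos$ is injective), that exactly one of them is $0$, and that the $\pm$ pairing is a bijection onto $\{1,\dots,m\}$ with no double counting — in particular that $\sin\!\left(\tfrac{\pi j}{2m+1}\right)\ne 0$ for $1\le j\le m$, which holds since $0<\tfrac{\pi j}{2m+1}<\pi$. Everything else is a routine comparison of monic (up to the common factor $2^{2m}$) polynomials of equal degree with matching root sets. I expect no real obstacle beyond this indexing care.
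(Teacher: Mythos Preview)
Your proposal is correct and follows essentially the same approach as the paper's proof: both identify the $2m+1$ roots of $T_{2m+1}$ as $\cos\!\left(\frac{(2\ell-1)\pi}{2(2m+1)}\right)$, factor out the root at $z=0$, pair the remaining roots into $\pm$ pairs, convert the cosines to sines via the complementary-angle identity, and match the leading coefficient $2^{2m}$. Your write-up is in fact a bit more explicit about the distinctness and bijection bookkeeping than the paper's version.
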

    \begin{proof}
    Abramowitz and Stegun \cite[Page 787, Equation 22.16.4]{Abramowitz} provide the zeroes of $T_n(z)$ as $\cos\left( \frac{2\ell-1}{2n}\pi\right), 1\leq 
    \ell\leq n$. The leading coefficient of $T_{2m+1}(z)$ is $2^{2m} $, as can be seen from equation \eqref{chebytsum}, giving the product representation
    \begin{align*}
        T_{2m+1}(z) &= 2^{2m} \prod_{\ell=1}^{2m+1} \left( z - \cos\left(   \frac{\ell-1/2}{2m+1}\pi \right) \right)  \\
        &=2^{2m} z\prod_{\ell=1}^{m} \left( z - \cos\left(   \frac{\ell-1/2}{2m+1}\pi \right) \right)\left( z + \cos\left(   \frac{\ell-1/2}{2m+1}\pi \right) \right)  \\
        &=2^{2m} z\prod_{\ell=1}^{m} \left( z^2 - \sin^2\left(   \frac{\ell-m-1}{2m+1}\pi \right) \right)  \\
        &=2^{2m} z\prod_{\ell=1}^{m} \left( z^2 - \sin^2\left(   \frac{\ell}{2m+1}\pi \right) \right).
    \end{align*}
    
    \end{proof}
    
    \begin{lem}\label{primitive}
    For $m\geq 1$ and any $(2m+1)$-th primitive root of unity $\zeta$,
    \begin{align*}
        \prod_{j=1}^{m} (x^2+2x-1+\zeta^j+\overline{\zeta^j})&=\dfrac{T_{2m+1}(z)}{z},\\
        \sum_{j=1}^{m} \dfrac{2x+\zeta^j+\overline{\zeta^j}}{x^2+2x-1+\zeta^j+\overline{\zeta^j}}&=-\dfrac{1}{2z}+\dfrac{2m+1}{2T_{2m+1}(z)}(T_{2m+1}(z)-(z-1)U_{2m}(z)),
    \end{align*}
    with the change of variables $z=\frac{x+1}{2}$.
    \end{lem}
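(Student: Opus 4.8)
The plan is to obtain both identities from Lemma~\ref{chebyprod} by an elementary change of variables together with a logarithmic-derivative trick.

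\textbf{First identity.} For any primitive $(2m+1)$-th root of unity $\zeta$, the powers $\zeta,\zeta^2,\dots,\zeta^{2m}$ are exactly the non-trivial $(2m+1)$-th roots of unity, and since $\overline{\zeta^j}=\zeta^{2m+1-j}$, the $m$ real numbers $\zeta^j+\overline{\zeta^j}$ ($1\le j\le m$) are, in some order, the numbers $2\cos(2\pi\ell/(2m+1))$ ($1\le\ell\le m$), independently of the choice of $\zeta$. I would then set $z=\frac{x+1}{2}$, so that $x^2+2x-1=(x+1)^2-2=4z^2-2$, and use the half-angle identity $2\cos(2\pi\ell/(2m+1))=2-4\sin^2(\pi\ell/(2m+1))$ to obtain
\[
\prod_{j=1}^m\bigl(x^2+2x-1+\zeta^j+\overline{\zeta^j}\bigr)=\prod_{\ell=1}^m 4\Bigl(z^2-\sin^2\!\bigl(\tfrac{\pi\ell}{2m+1}\bigr)\Bigr)=2^{2m}\prod_{\ell=1}^m\Bigl(z^2-\sin^2\!\bigl(\tfrac{\pi\ell}{2m+1}\bigr)\Bigr),
\]
which is $T_{2m+1}(z)/z$ by Lemma~\ref{chebyprod}.

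\textbf{Second identity.} Put $f(x)=x^2+2x-1$, $c_j=\zeta^j+\overline{\zeta^j}$, and $P(x)=\prod_{j=1}^m(f(x)+c_j)$, so $P(x)=T_{2m+1}(z)/z$ by the first part. Logarithmic differentiation gives $\frac{P'(x)}{P(x)}=\sum_{j=1}^m\frac{2x+2}{f(x)+c_j}$, hence also $\sum_{j=1}^m\frac{1}{f(x)+c_j}=\frac{1}{2(x+1)}\frac{P'(x)}{P(x)}$. The algebraic heart of the argument is the decomposition $2x+c_j=(2x+2)-\bigl((x+1)^2-(f(x)+c_j)\bigr)$, which follows from $2-c_j=(x+1)^2-(f(x)+c_j)$; dividing by $f(x)+c_j$ and summing the three resulting terms over $j$ collapses the sum to
\[
\sum_{j=1}^m\frac{2x+c_j}{f(x)+c_j}=m+\frac{1-x}{2}\cdot\frac{P'(x)}{P(x)}.
\]
It then remains to substitute $P(x)=T_{2m+1}(z)/z$ with $z=\frac{x+1}{2}$: by the chain rule ($dz/dx=\tfrac12$) and $T_{2m+1}'(z)=(2m+1)U_{2m}(z)$ one gets $\frac{P'(x)}{P(x)}=\frac12\bigl(\frac{(2m+1)U_{2m}(z)}{T_{2m+1}(z)}-\frac1z\bigr)$ and $\frac{1-x}{2}=1-z$; plugging in and using $m-\frac{1-z}{2z}=\frac{2m+1}{2}-\frac1{2z}$ produces the claimed expression after collecting terms.

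\textbf{Main obstacle.} No step is deep; the proof is essentially bookkeeping. The only points requiring a little care are: verifying that $\{\zeta^j+\overline{\zeta^j}\}_{j=1}^m$ is independent of the primitive root (the short pairing argument above), the identity $T_n'=nU_{n-1}$ (immediate by differentiating $T_n(\cos\theta)=\cos n\theta$ in $\theta$ and comparing with $U_{n-1}(\cos\theta)\sin\theta=\sin n\theta$), and faithfully carrying the substitution $z=(x+1)/2$ through the logarithmic derivative so the final simplification matches the stated form.
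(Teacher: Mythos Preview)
Your proof is correct and follows essentially the same route as the paper: both derive the first identity from Lemma~\ref{chebyprod} via the half-angle substitution, and both obtain the second identity by logarithmically differentiating the product and splitting the numerator as $1+\text{(remainder)}$ before substituting $T_{2m+1}'=(2m+1)U_{2m}$. The only cosmetic difference is that the paper carries out the computation in the $z$-variable with $\sin^2(\pi j/(2m+1))$ throughout, whereas you work in $x$ with the abstract $c_j$ and convert at the end; your explicit remark that $\{\zeta^j+\overline{\zeta^j}\}_{j=1}^m$ is independent of the primitive root chosen is a small clarification the paper leaves implicit.
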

    
    \begin{proof}
    Applying Lemma \ref{chebyprod},
    \begin{align*}
    \prod_{j=1}^{m} (x^2+2x-1+\zeta^j+\overline{\zeta^j}) &=   2^{2m}  \prod_{j=1}^{m} \left( \left(\frac{x+1}{2}\right)^2-\left( \frac{ \zeta^{j/2}-{\overline{\zeta^{j/2}}}}{2\imath} \right)^2\right) \\
    &= 2^{2m} \prod_{j=1}^{m} \left( \left(\frac{x+1}{2}\right)^2-\sin^2 \left( \frac{\pi j}{2m+1}  \right) \right)  \\
    &=   \frac{ T_{2m+1}\left( \frac{x+1}{2} \right)}{ (x+1)/2 }.
    \end{align*}
    Also, we have
    \begin{align}
    \sum_{j=1}^{m} \dfrac{2x+\zeta^j+\overline{\zeta^j}}{x^2+2x-1+\zeta^j+\overline{\zeta^j}}&=\sum_{j=1}^{m} \frac{\frac{x+1}{2} - \left( \frac{\zeta^{j/2} -\overline{\zeta^{j/2}}}{2\imath } \right)^2
    }{\left(\frac{x+1}{2}\right)^2 - \left( \frac{\zeta^{j/2} -\overline{\zeta^{j/2}}}{2\imath } \right)^2
    }\nonumber\\
    &=\sum_{j=1}^{m} \frac{\frac{x+1}{2} - \sin^2 \left(\frac{\pi j}{2m+1}\right)}{\left(\frac{x+1}{2}\right)^2 - \sin^2 \left(\frac{\pi j}{2m+1}\right)}\label{eq:sinderiv}.
    \end{align}
    Taking a logarithm in Lemma \ref{chebyprod} and then differentiating gives
    \begin{align}
        \log T_{2m+1}(z)-\log z &= \log 2^{2m}+ \sum_{j=1}^{m}\log \left(z^2 -\sin^2 \left( \dfrac{\pi j}{2m+1}  \right) \right),  \nonumber\\
        \dfrac{(2m+1)U_{2m}(z)}{T_{2m+1}(z)} -\frac{1}{z} &= 2z\sum_{j=1}^{m} \frac{1}{ z^2 - \sin^2 \left( \frac{\pi j}{2m+1}  \right)},\label{cheby3}
    \end{align}
    where we used $T'_{n}(z) = nU_{n-1}(z)$ for $n \geq 1$.  This can be verified from the series expansions of both. Substituting equation \eqref{cheby3} into equation \eqref{eq:sinderiv} with $z=(x+1)/2$ gives
    \begin{align*}
    \sum_{j=1}^{m} \frac{z- \sin^2 \left(\frac{\pi j}{2m+1}\right)}{z^2 - \sin^2 \left(\frac{\pi j}{2m+1}\right)} &= \sum_{j=1}^{m} \frac{z^2- \sin^2 \left(\frac{\pi j}{2m+1}\right)}{z^2 - \sin^2 \left(\frac{\pi j}{2m+1}\right)}+\sum_{j=1}^{m} \frac{z-z^2}{z^2 - \sin^2 \left(\frac{\pi j}{2m+1}\right)}\\
    &= m+ (z-z^2)\sum_{j=1}^{m} \frac{1}{z^2 - \sin^2 \left(\frac{\pi j}{2m+1}\right)} \\
    &=m+ \frac{(1-z)(2m+1)}{2} \frac{U_{2m(z)}}{T_{2m+1}(z)} - \frac{1-z}{2z} \\
    &= -\frac{1}{2z} + \frac{2m+1}{2T_{2m+1}(z)} \p*{T_{2m+1}(z)-(z-1)U_{2m}(z)}.
    \end{align*}
    \end{proof}

    We generalize Lemma \ref{primitive} for all roots of unity $\zeta$.
    \begin{lem}\label{nonprimitive}
    For $m\geq 1$ and $2m+1 \ge t \ge 1$, let $\zeta = e^{2\pi \imath t/(2m+1)}$ be a $(2m+1)$-th root of unity, where $\imath=\sqrt{-1}$. Define $g=\gcd(2m+1,t)$ and $n=\frac{2m+1}{g}$. Then    \begin{align*}
        \prod_{j=1}^{m} (x^2+2x-1+\zeta^j+\overline{\zeta^j})&=\dfrac{(2T_n(z))^g}{2z},\\
        \sum_{j=1}^{m} \dfrac{2x+\zeta^j+\overline{\zeta^j}}{x^2+2x-1+\zeta^j+\overline{\zeta^j}}&=-\dfrac{1}{2z}+\dfrac{2m+1}{2T_{n}(z)}(T_{n}(z)-(z-1)U_{n-1}(z)),
        \end{align*}
        with the change of variables $z=\frac{x+1}{2}$.
    \end{lem}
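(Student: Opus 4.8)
The plan is to reduce both identities to Lemma~\ref{primitive} by exploiting periodicity modulo $n$: since $\zeta^n=1$, the $j$-th factor $x^2+2x-1+\zeta^j+\overline{\zeta^j}$ and the $j$-th summand depend only on $j$ modulo $n$, so the index set $\{1,\dots,m\}$ can be grouped into complete residue systems modulo $n$ together with a leftover run. Two bookkeeping facts organize this. First, since $\gcd(t/g,n)=1$, the element $\zeta=e^{2\pi\imath(t/g)/n}$ is a primitive $n$-th root of unity. Second, $g$ and $n$ are both odd (being divisors of $2m+1$), so writing $g=2k+1$ and $n=2m'+1$ we get $kn+m'=\frac{(g-1)n+(n-1)}{2}=\frac{gn-1}{2}=m$, whence
\[\{1,\dots,m\}=\Bigl(\bigsqcup_{a=1}^{k}\{(a-1)n+1,\dots,an\}\Bigr)\sqcup\{kn+1,\dots,kn+m'\}\]
is a disjoint union of $k$ blocks of $n$ consecutive integers and a final run of $m'$ integers (when $g=1$ there are no full blocks and the final run is all of $\{1,\dots,m\}$, directly recovering Lemma~\ref{primitive}).

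Assuming $n\ge 3$, I would analyze one complete block. The residues of $j$ modulo $n$ there cover $\{0,1,\dots,n-1\}$: the residue $0$ gives $\zeta^j=1$, contributing $(x+1)^2=(2z)^2$ to the product and $\frac{1}{z}$ to the sum, while residues $\ell$ and $n-\ell$ ($1\le\ell\le m'$) give equal values $\zeta^\ell+\overline{\zeta^\ell}=\zeta^{n-\ell}+\overline{\zeta^{n-\ell}}$. Because $\zeta$ is a primitive $n$-th root of unity, Lemma~\ref{primitive} applied with parameter $m'$ and root $\zeta$ gives, with $C:=T_n(z)-(z-1)U_{n-1}(z)$,
\[\prod_{\ell=1}^{m'}\bigl(x^2+2x-1+\zeta^\ell+\overline{\zeta^\ell}\bigr)=\frac{T_n(z)}{z},\qquad\sum_{\ell=1}^{m'}\frac{2x+\zeta^\ell+\overline{\zeta^\ell}}{x^2+2x-1+\zeta^\ell+\overline{\zeta^\ell}}=-\frac{1}{2z}+\frac{n}{2T_n(z)}\,C,\]
so the nontrivial roots in one complete block contribute $(T_n(z)/z)^2$ to the product and $2\bigl(-\frac{1}{2z}+\frac{n}{2T_n(z)}C\bigr)$ to the sum; the final run $\{kn+1,\dots,kn+m'\}$, whose residues are $1,\dots,m'$ since $\zeta^{kn+\ell}=\zeta^\ell$, contributes $T_n(z)/z$ and $-\frac{1}{2z}+\frac{n}{2T_n(z)}C$ respectively.

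Assembling, the product equals $\bigl((2z)^2(T_n(z)/z)^2\bigr)^{k}\cdot(T_n(z)/z)=(2T_n(z))^{2k}\cdot T_n(z)/z=(2T_n(z))^{g}/(2z)$ using $2k+1=g$. The sum equals $k\bigl(\frac{1}{z}+2(-\frac{1}{2z}+\frac{n}{2T_n(z)}C)\bigr)+\bigl(-\frac{1}{2z}+\frac{n}{2T_n(z)}C\bigr)=-\frac{1}{2z}+\frac{(2k+1)n}{2T_n(z)}C=-\frac{1}{2z}+\frac{2m+1}{2T_n(z)}C$, using $\frac{1}{z}+2(-\frac{1}{2z}+\frac{n}{2T_n(z)}C)=\frac{n}{T_n(z)}C$ for each full block and $(2k+1)n=gn=2m+1$. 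Finally, the degenerate case $n=1$ (i.e.\ $\zeta=1$, $m'=0$, so Lemma~\ref{primitive} is vacuous) is checked by hand: every factor equals $x^2+2x+1=(2z)^2$, so the product is $(2z)^{2m}=(2T_1(z))^{2m+1}/(2z)$ and the sum is $m\cdot\frac{2x+2}{(x+1)^2}=\frac{m}{z}=-\frac{1}{2z}+\frac{2m+1}{2T_1(z)}\bigl(T_1(z)-(z-1)U_0(z)\bigr)$, using $T_1(z)=z$ and $U_0(z)=1$.

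The main obstacle is the combinatorial accounting: checking $m=kn+m'$ and, above all, keeping straight how often each conjugate pair of nontrivial $n$-th roots occurs (twice per complete block, once in the final run) so the pieces add up correctly; once that is settled, both identities fall out of Lemma~\ref{primitive}. As a cross-check—and a cleaner alternative for the second identity—the elementary identity $\frac{2x+c}{x^2+2x-1+c}=1+\frac{1-x^2}{x^2+2x-1+c}$, valid for every constant $c$, yields $\sum_{j=1}^{m}\frac{2x+\zeta^j+\overline{\zeta^j}}{x^2+2x-1+\zeta^j+\overline{\zeta^j}}=m+\frac{1-x}{2}\cdot\frac{P'(x)}{P(x)}$ where $P(x)=\prod_{j=1}^{m}(x^2+2x-1+\zeta^j+\overline{\zeta^j})$, so the second formula follows from the first by logarithmic differentiation, using $T_n'(z)=nU_{n-1}(z)$ and $z=\frac{x+1}{2}$.
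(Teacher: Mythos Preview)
Your proof is correct and follows essentially the same approach as the paper: both arguments exploit that $\zeta$ is a primitive $n$-th root of unity, use the decomposition $m=\frac{g-1}{2}\cdot n+\frac{n-1}{2}$ to split $\{1,\dots,m\}$ into $\frac{g-1}{2}$ full periods and a final half-period, and then invoke Lemma~\ref{primitive} on each piece. Your treatment is slightly more thorough in that you explicitly dispose of the degenerate case $n=1$ (which the paper's proof tacitly assumes works via empty products and sums) and you add the logarithmic-differentiation cross-check, but neither of these changes the underlying strategy.
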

    \begin{proof}
    
    Note that $\zeta$ is a primitive $n$th root of unity. By Lemma \ref{primitive},
    \begin{align*}
        \prod_{j=1}^{\frac{n-1}{2}} (x^2+2x-1+\zeta^j+\overline{\zeta^j})&=\dfrac{T_{n}(z)}{z},\\
        \sum_{j=1}^{\frac{n-1}{2}} \dfrac{2x+\zeta^j+\overline{\zeta^j}}{x^2+2x-1+\zeta^j+\overline{\zeta^j}}&=-\dfrac{1}{2z}+\dfrac{2m+1}{2T_{n}(z)}(T_{n}(z)-(z-1)U_{n-1}(z)).
    \end{align*}
    Notice $m=\frac{g-1}{2}n+\frac{n-1}{2}$. Using the periodicity of roots of unity several times,
    \begin{align*}
        \prod_{j=1}^{m} (x^2+2x-1+\zeta^j+\overline{\zeta^j})&=\p*{\prod_{j=1}^{n} (x^2+2x-1+\zeta^j+\overline{\zeta^j})}^{\frac{g-1}{2}}\p*{\prod_{j=1}^{\frac{n-1}{2}} (x^2+2x-1+\zeta^j+\overline{\zeta^j})}\\
        &=\p*{\prod_{j=1}^{\frac{n-1}{2}} (x^2+2x-1+\zeta^j+\overline{\zeta^j})}^{g}(x^2+2x+1)^{\frac{g-1}{2}}\\
        &=\p*{\dfrac{T_n(z)}{z}}^g(x+1)^{g-1}\\
        &=\dfrac{(2T_n(z))^g}{2z}.
    \end{align*}
    Similarly,
    \begin{align*}
        \sum_{j=1}^{m} \dfrac{2x+\zeta^j+\overline{\zeta^j}}{x^2+2x-1+\zeta^j+\overline{\zeta^j}}&=g\sum_{j=1}^{\frac{n-1}{2}} \dfrac{2x+\zeta^j+\overline{\zeta^j}}{x^2+2x-1+\zeta^j+\overline{\zeta^j}} + \dfrac{g-1}{2}\dfrac{2x+2}{x^2+2x+1}\\
        &=g\p*{-\dfrac{1}{2z}+\dfrac{n}{2T_{n}(z)}(T_{n}(z)-(z-1)U_{n-1}(z))}+\dfrac{g-1}{x+1}\\
        &=-\dfrac{1}{2z}+\dfrac{2m+1}{2T_{n}(z)}(T_{n}(z)-(z-1)U_{n-1}(z)).
    \end{align*}
    \end{proof}
    
    Finally, we compute the characteristic polynomial of $\mathcal{G}_{m,d}$. Denote $\mathbf{J}_{a,b}$ to be the $a\times b$ all ones matrix.

    \begin{thm}\label{charpoly}
    For $d\geq 2m+2\geq 4$, the characteristic polynomial $p(x)$ of $\mathcal{G}_{m,d}$ is
    \[(x+1)^{(d-2m)(2m+1)}\prod_{j=1}^{2m+1} (2T_{n} (z))^{g-1}\left[\p*{-2m+1+\dfrac{2m-d}{z}} T_{n}(z) + (2m+1)(z-1)U_{n-1}(z)\right],\]
    with the change of variables $g=\gcd(2m+1,j)$, $n=\frac{2m+1}{g}$, and $z=\frac{x+1}{2}$.
    \end{thm}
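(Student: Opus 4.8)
The plan is to exploit the block-circulant structure of the adjacency matrix $\mathbf{A}$ of $\mathcal{G}_{m,d}$. Ordering the vertices lexicographically by the pair $(i,j)$, we have $\mathbf{A} = \B(\mathbf{b}_0,\dots,\mathbf{b}_{2m})$ with $(d+1)\times(d+1)$ blocks. The diagonal block is $\mathbf{b}_0 = \mathbf{J}_{d+1} - \mathbf{I}_{d+1} - \sum_{a=1}^m(E_{2a-2,2a-1}+E_{2a-1,2a-2})$, where $E_{p,q}$ is the matrix with a single $1$ in entry $(p,q)$; the edge of $E'$ joining $(i,2j+1)$ to $(i+j+1,2j)$ puts a $1$ into $\mathbf{b}_{j+1}$ at position $(2j+1,2j)$ and, by symmetry, a $1$ into $\mathbf{b}_{2m-j}$ at position $(2j,2j+1)$, for $0\le j\le m-1$ (the indices $1,\dots,m$ and $m+1,\dots,2m$ occurring here are disjoint), while all remaining $\mathbf{b}_k$ vanish. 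By Lemma \ref{blocklemma}, $p(x) = \prod_{\zeta^{2m+1}=1}\det(xI - \mathbf{H}_\zeta)$ with $\mathbf{H}_\zeta = \sum_k \zeta^k\mathbf{b}_k$, so it suffices to evaluate each factor; the factor coming from $\zeta = e^{2\pi\imath t/(2m+1)}$ will be the $j=t$ term of the displayed product.

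Fix such a $\zeta$. Using $\zeta^{2m-j} = \zeta^{-(j+1)} = \overline{\zeta^{j+1}}$, a short computation gives
\[ xI - \mathbf{H}_\zeta = \Bigl[(x+1)\mathbf{I}_{d+1} + \sum_{j=0}^{m-1}\bigl((1-\zeta^{j+1})E_{2j+1,2j} + (1-\overline{\zeta^{j+1}})E_{2j,2j+1}\bigr)\Bigr] - \mathbf{J}_{d+1} =: \mathbf{A}_\zeta - \mathbf{1}\mathbf{1}^T . \]
Here $\mathbf{A}_\zeta$ is block diagonal: $m$ blocks of size $2$ on the coordinate pairs $\{2j,2j+1\}$ with diagonal entries $x+1$ and off-diagonal entries $1-\overline{\zeta^{j+1}}$ and $1-\zeta^{j+1}$, followed by $(x+1)$ repeated $d+1-2m$ times. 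The rank-one determinant identity (Lemma \ref{matrix_det_lemma}, with $\mathbf{u}=-\mathbf{1}$, $\mathbf{v}=\mathbf{1}$) gives $\det(xI-\mathbf{H}_\zeta) = (1-\mathbf{1}^T\mathbf{A}_\zeta^{-1}\mathbf{1})\det\mathbf{A}_\zeta$, and both ingredients are immediate from block-diagonality: the $j$-th $2\times 2$ block has determinant $(x+1)^2-(1-\zeta^{j+1})(1-\overline{\zeta^{j+1}}) = x^2+2x-1+\zeta^{j+1}+\overline{\zeta^{j+1}}$ and the entry-sum of its inverse is $\tfrac{2x+\zeta^{j+1}+\overline{\zeta^{j+1}}}{x^2+2x-1+\zeta^{j+1}+\overline{\zeta^{j+1}}}$, while each scalar block contributes $x+1$ and $\tfrac1{x+1}$. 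Re-indexing $j+1\mapsto j$,
\[ \det\mathbf{A}_\zeta = (x+1)^{d+1-2m}\prod_{j=1}^m\bigl(x^2+2x-1+\zeta^j+\overline{\zeta^j}\bigr), \qquad \mathbf{1}^T\mathbf{A}_\zeta^{-1}\mathbf{1} = \sum_{j=1}^m\frac{2x+\zeta^j+\overline{\zeta^j}}{x^2+2x-1+\zeta^j+\overline{\zeta^j}} + \frac{d+1-2m}{x+1} . \]

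Now substitute the two identities of Lemma \ref{nonprimitive} with $g=\gcd(2m+1,t)$, $n=(2m+1)/g$, and $z=\tfrac{x+1}{2}$. The product collapses to $(2z)^{d+1-2m}\cdot\tfrac{(2T_n(z))^g}{2z} = (2z)^{d-2m}(2T_n(z))^g$, and after collecting terms the scalar factor becomes $1-\mathbf{1}^T\mathbf{A}_\zeta^{-1}\mathbf{1} = \tfrac12\bigl[-2m+1+\tfrac{2m-d}{z}+\tfrac{(2m+1)(z-1)U_{n-1}(z)}{T_n(z)}\bigr]$, the constant $-2m+1$ arising from the cancellation $1-\tfrac{2m+1}{2}=\tfrac{1-2m}{2}$. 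Multiplying the two and pulling a single $T_n(z)$ into the bracket gives $\det(xI-\mathbf{H}_\zeta) = (2z)^{d-2m}(2T_n(z))^{g-1}\bigl[(-2m+1+\tfrac{2m-d}{z})T_n(z)+(2m+1)(z-1)U_{n-1}(z)\bigr]$, and taking the product over the $2m+1$ roots of unity, together with $(2z)^{(d-2m)(2m+1)} = (x+1)^{(d-2m)(2m+1)}$, yields the claimed formula. The place to be careful is the start: correctly reading off the sparsity pattern of the $\mathbf{b}_k$ — in particular, that the deleted matching $E_i$ sits on the same coordinate pairs $\{2j,2j+1\}$ that carry the $E'$ edges, so that $\mathbf{A}_\zeta$ really is $2\times 2$ block diagonal — and then the post-substitution algebra, which must compress $T_n$, $U_{n-1}$, the powers of $2z$, and the constant $\tfrac{2m+1}{2}$ into the compact bracket. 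The root $\zeta=1$ (with $g=2m+1$, $n=1$, $T_1(z)=z$, $U_0(z)=1$) needs no separate treatment, since Lemma \ref{nonprimitive} covers all $(2m+1)$-th roots of unity; as a sanity check the total degree comes out to $(d+1)(2m+1)=|V(\mathcal{G}_{m,d})|$. (One could instead handle $\mathbf{A}_\zeta-\mathbf{J}_{d+1}$ via a Schur complement against the $d+1-2m$ unmatched coordinates, using Lemma \ref{ai+bj} to invert the resulting all-ones block, but the rank-one route above is shorter.)
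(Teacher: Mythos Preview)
Your argument is correct, and it is in fact a slightly cleaner route than the paper's. The paper writes $xI-\mathbf{H}_\zeta$ as a $2\times 2$ block matrix (the first $2m$ coordinates versus the remaining $d+1-2m$), applies a Schur complement to reduce to a $2m\times 2m$ determinant, and only then uses the matrix determinant lemma on $\mathbf{M}_\zeta - \tfrac{x+1}{x-d+2m}\mathbf{J}_{2m}$, invoking Lemma~\ref{ai+bj} along the way to invert the $(d+1-2m)\times(d+1-2m)$ bottom-right block. You instead observe that $xI-\mathbf{H}_\zeta = \mathbf{A}_\zeta - \mathbf{1}\mathbf{1}^T$ with $\mathbf{A}_\zeta$ fully block diagonal on the \emph{entire} $(d+1)$-dimensional space---the $m$ size-$2$ blocks together with $d+1-2m$ scalar blocks $(x+1)$---and apply Lemma~\ref{matrix_det_lemma} once, absorbing the scalar-block contributions $(x+1)^{d+1-2m}$ and $\tfrac{d+1-2m}{x+1}$ directly into the product and sum before invoking Lemma~\ref{nonprimitive}. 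This bypasses the Schur complement and Lemma~\ref{ai+bj} entirely; as you yourself note, the paper's approach is the alternative you sketch in the closing parenthetical. Both arrive at exactly the same per-$\zeta$ factor, and the algebraic verifications (the entry-sum of the $2\times2$ inverse, the constant $1-\tfrac{2m+1}{2}=\tfrac{1-2m}{2}$, the handling of $\zeta=1$ via $n=1$, $T_1(z)=z$, $U_0=1$) all check out.
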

    
    \begin{proof}
    By the symmetric construction of $\mathcal{G}_{m,d}$, its adjacency matrix is a block circulant matrix $\mathcal{B}(\mathbf{b}_0,\ldots,\mathbf{b}_{2m})$. For any $i\neq j$, there is only one edge between $H_i$ and $H_j$. Then for $1\leq i\leq m$, $\mathbf{b}_i$ is a matrix with a single entry of $1$ in position $(2i,2i-1)$. Moreover, $\mathbf{b}_i=\mathbf{b}_{2m+1-i}^T$ for $m+1\leq i\leq 2m$. Finally, $\mathbf{b}_0$ is the adjacency matrix of any $H_i$. That is,
    \[\mathbf{b}_0=\mathbf{J}_{d+1}-\mathbf{I}_{d+1}-\sum_{i=1}^{2m} \mathbf{b}_i.\]
    By Lemma \ref{blocklemma}, the eigenvalues of the adjacency matrix are the union of the eigenvalues of each
    \begin{align*}
        \mathbf{H}_\zeta &= \mathbf{b}_0+\zeta \mathbf{b}_1 + \cdots + \zeta^{2m}\mathbf{b}_{2m}\\
        &= \begin{bmatrix}
            \mathbf{A} & \mathbf{J}_{2m,d-2m+1}\\
            \mathbf{J}_{d-2m+1,2m} & \mathbf{J}_{d-2m+1}-\mathbf{I}_{d-2m+1}
            \end{bmatrix},
    \end{align*}
    where $\zeta$ is a $(2m+1)$-th root of unity and
    \[\mathbf{A}=\mathbf{J}_{2m}+\begin{bmatrix}
        -1 & \overline{\zeta}-1\\
        \zeta-1 & -1\\
        & & -1 & \overline{\zeta^2}-1\\
        & & \zeta^2-1 & -1\\
        & & & &\ddots\\
        & & & & & -1 & \overline{\zeta^m}-1 \\
        & & & & & \zeta^m-1 & -1
    \end{bmatrix}.\]
    The characteristic polynomial $p(x)$ is
    \[p(x)=\prod_{\zeta^{2m+1}=1}\det (x\mathbf{I}-\mathbf{H}_\zeta).\]
    We need the block determinant
    \[\det (x\mathbf{I}-\mathbf{H}_\zeta)=\begin{vmatrix}
            x\mathbf{I}_{2m}-\mathbf{A} & -\mathbf{J}_{2m,d-2m+1}\\
            -\mathbf{J}_{d-2m+1,2m} & (x+1)\mathbf{I}_{d-2m+1}-\mathbf{J}_{d-2m+1}
            \end{vmatrix}.\]
    Label the blocks $\mathbf{A}',\mathbf{B},\mathbf{C},\mathbf{D}$. Passing to the Schur complement,
    \begin{equation}\label{h-xi}\det (x\mathbf{I}-\mathbf{H}_\zeta)=\det(\mathbf{D}) \det(\mathbf{A}'-\mathbf{BD}^{-1}\mathbf{C}).\end{equation} 
    By Lemma \ref{ai+bj},
    \begin{align}\det \mathbf{D}&=(x+1)^{d-2m}(x-d+2m),\label{det D}\\
    \mathbf{D}^{-1}&=\frac{1}{x+1}\mathbf{I}+\frac{1}{(x+1)(x-d+2m)}\mathbf{J}.\nonumber\end{align}
    Note $\mathbf{J}_{n,m}\mathbf{J}_{m,l}=m\mathbf{J}_{n,l}$. Then,
    \begin{align*}
        \mathbf{BD}^{-1}\mathbf{C} &= \frac{d-2m+1}{x+1}\mathbf{J}+\frac{(d-2m+1)^2}{(x+1)(x-d+2m)}\mathbf{J}=\frac{d-2m+1}{x-d+2m}\mathbf{J}, \\
        \mathbf{A}'-\mathbf{BD}^{-1}\mathbf{C}&=-\frac{x+1}{x-d+2m}\mathbf{J}+\begin{bmatrix}
            x+1 & 1-\overline{\zeta}\\
            1-\zeta & x+1\\
            & & x+1 & 1-\overline{\zeta^2}\\
            & & 1-\zeta^2 & x+1\\
            & & & &\ddots\\
            & & & & & x+1 & 1-\overline{\zeta^m}\\
            & & & & & 1-\zeta^m & x+1\\
        \end{bmatrix}.
    \end{align*}
    Let $\mathbf{M}_\zeta$ be the block matrix summand of $\mathbf{A}'-\mathbf{BD}^{-1}\mathbf{C}$ and $\mathbf{u}$ be the all ones vector of dimension $2m$. Notice that $\mathbf{u}\mathbf{u}^T=\mathbf{J}_{2m}$, so that we can apply Lemma \ref{matrix_det_lemma} to give
    \begin{equation}
        \det(\mathbf{A}'-\mathbf{BD}^{-1}\mathbf{C})=\p*{1-\dfrac{x+1}{x-d+2m}\mathbf{u}^T\mathbf{M}_{\zeta}^{-1}\mathbf{u}}\det\mathbf{M}_\zeta\label{det_abdc} .
    \end{equation}
    Here, $\mathbf{u}^T\mathbf{M}_{\zeta}^{-1}\mathbf{u}$ gives the sum of the entries of $\mathbf{M}_{\zeta}^{-1}$. The inverse of a block diagonal matrix is the block diagonal matrix of the inverses of the blocks. Then the $j$th block of $\mathbf{M}_{\zeta}^{-1}$ is
    \[\dfrac{1}{x^2+2x-1+\zeta^j+\overline{\zeta^j}}\begin{bmatrix}
        x+1 & \overline{\zeta^j}-1\\
        \zeta^j-1 & x+1
    \end{bmatrix},\]
    and
    \[\mathbf{u}^T\mathbf{M}_{\zeta}^{-1}\mathbf{u}=\sum_{j=1}^{m} \dfrac{2x+\zeta^j+\overline{\zeta^j}}{x^2+2x-1+\zeta^j+\overline{\zeta^j}}.\]
    The determinant of a block diagonal matrix is the product of the blocks' determinants, so
    \[\det\mathbf{M}_\zeta = \prod_{j=1}^{m} (x^2+2x-1+\zeta^j+\overline{\zeta^j}).\]
    By Lemma \ref{nonprimitive},
    \begin{align*}
       \mathbf{u}^T\mathbf{M}_{\zeta}^{-1}\mathbf{u}&=-\dfrac{1}{2z}+\dfrac{2m+1}{2T_n(z)}(T_{n}(z)-(z-1)U_{n-1}(z)),\\
        \det\mathbf{M}_\zeta&=\dfrac{(2T_n(z))^g}{2z}.
    \end{align*}
    Substituting into \eqref{det_abdc}, we obtain
    \[\det(\mathbf{A'}-\mathbf{BD}^{-1}\mathbf{C})=\dfrac{(2T_{n} (z))^{g-1}}{x-d+2m}\left[\p*{-2m+1+\dfrac{2m-d}{z}} T_{n}(z) + (2m+1)(z-1)U_{n-1}(z)\right].\]
    With \eqref{det D}, we simplify \eqref{h-xi} to
    \[\det(x\mathbf{I}-\mathbf{H}_\zeta)=(x+1)^{d-2m}(2T_{n} (z))^{g-1}\left[ \p*{-2m+1+\dfrac{2m-d}{z}} T_{n}(z) + (2m+1)(z-1)U_{n-1}(z)\right].\]
    Taking the product of $\det(x\mathbf{I}-\mathbf{H}_\zeta)$ over all $2m+1$ roots of unity $\zeta$ yields the characteristic polynomial.
    \end{proof}
    
    \section{Bounding the second eigenvalue} \label{bound}
    Let $[z^n]f(z)$ denote the coefficient of $z^n$ in $f(z)$, and let $z_0\geq z_1\geq \cdots \geq z_{n-1}$ be the roots of $f(z)$. Given any monic polynomial $f(z)$, we can rewrite $f(z)$ as
    $$f(z)=\sum_{j=0}^n a_jz^j = \prod_{j=0}^{n-1}(z-z_j) = z^n - z^{n-1}\sum_{j=0}^{n-1}z_j +O(z^{n-2}).$$

    \begin{lem}[Graeffe's Method \cite{Graeffe2}]\label{Graeffe}
    The largest root of a monic polynomial $f(z)=\sum_{j=0}^n a_jz^j$ can be bounded by  \begin{equation}\label{rootbound}
     z_0 \leq \left(-4a_{n-4}+4a_{n-3}a_{n-1}+2a_{n-2}^2-4a_{n-2}a_{n-1}^2+a_{n-1}^4\right)^{1/4}.
     \end{equation}
    \end{lem}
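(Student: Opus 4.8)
The plan is to iterate the Graeffe root-squaring transformation twice and then bound the largest fourth power of a root by the sum of all of them. Recall that if $f$ is monic of degree $n$ with roots $z_0,\dots,z_{n-1}$, then $f_1(w) := (-1)^n f(\sqrt{w})\, f(-\sqrt{w})$ is an even function of $\sqrt{w}$, hence a monic polynomial of degree $n$ in $w$, whose roots are $z_0^2,\dots,z_{n-1}^2$. Applying this once more produces a monic polynomial $f_2(w) = \prod_{j=0}^{n-1}(w - z_j^4)$, and its coefficient of $w^{n-1}$ is $-\sum_{j=0}^{n-1} z_j^4$, i.e.\ $-p_4$, where $p_k := \sum_j z_j^k$ denotes the $k$-th power sum of the roots of $f$. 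So the entire argument reduces to expressing $p_4$ through the top four coefficients $a_{n-1},a_{n-2},a_{n-3},a_{n-4}$ of $f$, together with the elementary observation that $z_0^4 \le p_4$ when the roots are real.

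For the first point I would not track $[w^{n-1}]f_2$ through the two transformations, but instead invoke Newton's identities directly on $f$. Since $f$ is monic, the elementary symmetric functions of its roots are $e_1 = -a_{n-1}$, $e_2 = a_{n-2}$, $e_3 = -a_{n-3}$, $e_4 = a_{n-4}$, and Newton's identities give recursively $p_1 = e_1$, $p_2 = e_1^2 - 2e_2$, $p_3 = e_1^3 - 3 e_1 e_2 + 3 e_3$, and finally
\[
p_4 = e_1^4 - 4 e_1^2 e_2 + 4 e_1 e_3 + 2 e_2^2 - 4 e_4
    = a_{n-1}^4 - 4 a_{n-1}^2 a_{n-2} + 4 a_{n-1} a_{n-3} + 2 a_{n-2}^2 - 4 a_{n-4},
\]
which is exactly the quantity under the fourth root in \eqref{rootbound}.

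For the second point, the ordering $z_0 \ge z_1 \ge \cdots \ge z_{n-1}$ presupposes that all roots are real, so each $z_j^4 \ge 0$; hence $p_4 = \sum_j z_j^4 \ge 0$ and $z_0^4 \le p_4$. If $z_0 \le 0$ the claimed bound is trivial, and otherwise taking fourth roots gives $z_0 \le p_4^{1/4}$, completing the proof. There is no serious obstacle in this argument: the only step demanding any care is the bookkeeping in Newton's identities (equivalently, expanding the two root-squaring steps carefully), and one should keep in mind that it is precisely the real-rootedness of $f$ that licenses bounding $z_0^4$ by the full power sum $p_4$.
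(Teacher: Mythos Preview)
Your argument is correct and follows the same idea as the paper: bound $z_0^4$ by the power sum $p_4=\sum_j z_j^4$ (using real-rootedness so that all summands are nonnegative), and then express $p_4$ through the top coefficients $a_{n-1},\dots,a_{n-4}$. The only difference is cosmetic: the paper obtains $p_4$ by literally multiplying out $f(z)f(-z)f(\imath z)f(-\imath z)=\prod_j(z^4-z_j^4)$ and reading off $[z^{4n-4}]$, whereas you short-circuit that computation with Newton's identities. Your route is a bit cleaner and avoids the bookkeeping of the four-fold product; the paper's route has the advantage of making the Graeffe iteration explicit and thus indicating how one would push to higher powers. Either way the core inequality and the resulting formula are identical.
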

    
    \begin{proof}
    We compute
    $$f(z)f(-z) = \prod_{j=0}^{n-1}(z-z_j)(z+z_j)=\prod_{j=0}^{n-1}(z^2-z_j^2)=z^{2n}-z^{2n-2}\sum_{j=0}^{n-1} z_j^2+O(z^{2n-4}).$$
    We know that this is an even function, so $a_{2n-1}=0$. Similarly, we can then compute 
    \begin{align*}
    f(z)f(-z)f(\imath z)f(-\imath z) &= \prod_{j=0}^{n-1}(z-z_j)(z+z_j)(z-\imath z_j)(z+\imath z_j)=\prod_{j=0}^{n-1}(z^4-z_j^4) \\
    &=z^{4n}-z^{4n-4}\sum_{j=0}^{n-1} z_j^4+O(z^{4n-8}).
    \end{align*}
    We conclude that
    $$z_0 \leq \left(\sum_{j=0}^{n-1} z_j^4\right)^{1/4} = \left(- [z^{4n-4}]f(z)f(-z)f(\imath z)f(-\imath z)\right)^{1/4}.
     $$
     Moreover, only the powers of $z$ with degree at least $n-4$ multiply to produce a term of degree $4n-4$. By explicitly multiplying these polynomials out, we then have 
     
     \begin{align*}
     f(z)f(-z)&f(\imath z)f(-\imath z)\\&= \prod_{k=1}^4 \left( (\imath ^kz)^n  +a_{n-1}(\imath ^kz)^{n-1} +a_{n-2}(\imath ^kz)^{n-2} +a_{n-3}(\imath ^k z)^{n-3} +a_{n-4}(\imath ^kz)^{n-4} + O(z^{n-5})    \right) \\
     &= z^{4n} - z^{4n-4} \left(-4a_{n-4}+4a_{n-3}a_{n-1}+2a_{n-2}^2-4a_{n-2}a_{n-1}^2+a_{n-1}^4\right) +O(z^{4n-8}).
     \end{align*}
     \end{proof}
     The intuition behind why Graeffe's method suffices here is that if the polynomial's roots are well separated, then Graeffe's method provides extremely precise approximations. Here, numerically we observed $d-1\leq \lambda_2 \leq d$ while the other roots were contained in $[-3,1]$. Then, passing to fourth powers using Graeffe's method, the sum of fourth powers of all the roots is dominated by $\lambda_2^4$, so that taking a fourth root then provides a tight upper bound on the largest eigenvalue. We can use the same technique to derive a series of increasingly strong bounds on the largest root, as we incorporate more of the leading coefficients. For an introduction to the general method, see \cite{Graeffe2}. For historical discussion of the origin of the name and method, see \cite{Graeffe}.
     
     We require the following technical inequality.
    \begin{lem}\label{rootbound}
    For $d \geq 2m+2 \ge 6$, we have
    $$ \p*{d^4+4d^3-(8m-2)d^2+4d+8m^2+4m+1}^{1/4} < d -\frac{2m+1}{d+3}+1.$$
    \end{lem}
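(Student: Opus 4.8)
The plan is to clear the fourth root by raising both sides to the fourth power and then reduce the resulting degree-eight polynomial inequality to a manifestly positive quartic through one good substitution. Write $g:=d^4+4d^3-(8m-2)d^2+4d+8m^2+4m+1$ for the quantity under the radical and $f:=d+1-\frac{2m+1}{d+3}$ for the right-hand side. Both are positive when $d\ge 2m+2\ge 6$: from $2m+1\le d-1$ we get $f>d>0$, and from $8m-2\le 4d-10$ we get $g\ge d^4+10d^2>0$. Hence it suffices to show $g<f^4$. The crucial remark is that $f=\frac{A}{d+3}$ with $A:=(d+1)(d+3)-(2m+1)=d^2+4d+2-2m>0$, so the claim becomes $A^4>g\,(d+3)^4$.

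Next I would expand $A^4=\p*{(d+3)(d+1)-(2m+1)}^4$ by the binomial theorem and cancel the leading part against $g(d+3)^4$ using the two elementary identities $(d+1)^4-g=4(2m+1)(d^2-m)$ and $(d^2-m)(d+3)-(d+1)^3=-\p*{(m+3)d+3m+1}$. These collapse the difference to $A^4-g(d+3)^4=(2m+1)\Phi$, where
\begin{align*}
\Phi:={}&-4(d+3)^3\p*{(m+3)d+3m+1}+6(2m+1)(d+1)^2(d+3)^2\\
&-4(2m+1)^2(d+1)(d+3)+(2m+1)^3.
\end{align*}
Substituting $u:=d+3$ (so $d+1=u-2$, and the hypothesis $d\ge 2m+2$ reads $u\ge 2m+5$) and collecting powers of $u$ yields the clean quartic
\[\Phi=2(4m-3)u^4-8(6m-1)u^3+4(2m+1)(5-2m)u^2+8(2m+1)^2u+(2m+1)^3.\]

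It then remains only to show $\Phi>0$ for $m\ge 2$ and $u\ge 2m+5$. Since $4m-3>0$, I would first bound $2(4m-3)u^4-8(6m-1)u^3=2u^3\p*{(4m-3)u-(24m-4)}\ge 2u^3\p*{(4m-3)(2m+5)-(24m-4)}=2u^3(8m^2-10m-11)$, the last factor being positive for $m\ge 2$. Feeding this back and using $u\ge 1$ together with $(8m^2-10m-11)-2(2m+1)(2m-5)=6m-1$,
\begin{align*}
\Phi&\ge 2u^2\p*{u(8m^2-10m-11)-2(2m+1)(2m-5)}+8(2m+1)^2u+(2m+1)^3\\
&\ge 2u^2(6m-1)+8(2m+1)^2u+(2m+1)^3>0,
\end{align*}
which is what we wanted; notably no case split on the sign of $5-2m$ is needed.

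The only real obstacle is that the inequality is genuinely tight — at $(m,d)=(2,6)$ the two sides differ by less than $4\times 10^{-3}$ — so any estimate that discards a lower-order term such as $\frac{(2m+1)^2}{(d+3)^2}$ (equivalently, the constant term of $\Phi$) breaks down near that corner. Consequently the heart of the argument is not an estimate but the bookkeeping: choosing the substitution $A=(d+1)(d+3)-(2m+1)$ and the two cancellation identities that reduce the unwieldy difference $A^4-g(d+3)^4$ to the quartic $\Phi$ in $u$, after which every coefficient comparison is routine.
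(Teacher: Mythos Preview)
Your proof is correct and follows essentially the same route as the paper: raise both sides to the fourth power, clear the denominator $(d+3)^4$, and verify the resulting polynomial inequality by elementary bounding. The paper uses the substitution $(n,\ell)=(2m+1,d+1)$ and eliminates $n$ via $n\le\ell-2$ to finish with a cubic in $\ell$ alone, whereas you substitute $u=d+3$, factor out $2m+1$ explicitly, and bound the resulting quartic in $u$ using $u\ge 2m+5$; the bookkeeping differs but the strategy is the same.
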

    \begin{proof}
    Let $n=2m+1$ and $\ell = d+1$, so this statement reduces to showing
    $$\ell^4-4\ell^2n +2n^2-6n+8\ell n < \left(  \ell -\frac{n}{\ell+2}  \right)^4$$
    for $\ell \geq n+2 \geq 7$. By clearing denominators, expanding, and dividing through by $n$, this is equivalent to showing
     $$ 2\ell^4(2n-5)+8\ell^3(n-6) -4\ell^2(n^2+6n+12) -8\ell (n^2+8n-8) + n^3-32n+96 > 0.$$
    Note that $n^3-32n+96 > 0$ for $n\geq 5$, so we drop this term completely and divide by $2\ell$. It is left to show that
    $$\ell^3(2n-5)+4\ell^2(n-6) +2\ell( -n^2-6n-12) +4 (-n^2-8n+8) \geq 0.$$
    Since we have $\ell - 2 \geq n\geq 5$, we can lower bound the left-hand side as 
    \begin{align*}
    &\geq \ell^3(10-5)+4\ell^2(5-6) +2 \ell\p*{-(\ell-2)^2-6(\ell-2)-12 } +4\p*{-(\ell-2)^2-8(\ell-2) +8} \\
    & = 80 - 24 \ell - 12 \ell^2 + 3 \ell^3.
    \end{align*}
    However, for $\ell \geq n+2 \geq 7$, this polynomial is strictly positive.
    \end{proof}
        
    The main idea of our proof is to explicitly write down the first five coefficients of the characteristic polynomial, apply Graeffe's method, and finally show that Graeffe's method gives a slightly stronger bound than desired.  We can now prove the following.

    \begin{thm} \label{dupperbound}
    For $d\geq 2m+2\geq 4$, $$d-\dfrac{2m+1}{d+1}\leq \lambda_2(\mathcal{G}_{m,d})< d-\dfrac{2m+1}{d+3}.$$
    \end{thm}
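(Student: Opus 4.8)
The lower bound is already proved in Section~\ref{trees}, so the plan is to establish the upper bound $\lambda_2(\mathcal{G}_{m,d}) < d - \tfrac{2m+1}{d+3}$. Since Lemma~\ref{rootbound} requires $d \geq 2m+2 \geq 6$, I would first dispose of $m=1$ separately: there the relevant factor of the characteristic polynomial is the cubic $u^3-(d+1)u^2+3(d-1)$ in $u=x+1$, which can be handled by an elementary analysis of its real roots (or one cites the $\mathcal{G}_{1,d}$ computation of \cite{cioabaSpanningTrees}), so from now on assume $m\geq 2$. Next I would read off the spectrum from Theorem~\ref{charpoly}: with $u=x+1$ and $z=u/2$, the characteristic polynomial splits into $(x+1)^{(d-2m)(2m+1)}$, factors $(2T_n(z))^{g-1}$ (whose roots are of the form $2\cos\theta-1\in[-3,1]$), and, for each $j\in\{1,\dots,2m+1\}$, the monic degree-$n$ polynomial
\[
P_n(x) := \Bigl(-2m+1+\tfrac{2m-d}{z}\Bigr)T_n(z)+(2m+1)(z-1)U_{n-1}(z),
\]
where $g=\gcd(2m+1,j)$ and $n=(2m+1)/g$. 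One checks $P_1(x)=x-d$ (the Perron factor), and since $2m+1\geq 3$ forces $d-\tfrac{2m+1}{d+3}>1$, the theorem reduces to showing that the largest root of $P_n$, for each divisor $n\geq 3$ of $2m+1$, lies below $d-\tfrac{2m+1}{d+3}$; this simultaneously shows $d$ is a simple eigenvalue and identifies $\lambda_2$ with the largest such root.

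The core computation is to expand $P_n$ in powers of $u$ using the explicit Chebyshev series \eqref{chebytsum}--\eqref{chebyusum} (and the fact that $T_n(z)/z$ is a polynomial since $n$ is odd). Tracking parities carefully, I would obtain
\[
P_n(u) = u^n-(d+1)u^{n-1}+(2m+1-n)u^{n-2}+\bigl(n(d+1)-4m-2\bigr)u^{n-3}+\tfrac{(n-3)(n-4m-2)}{2}\,u^{n-4}+\cdots ,
\]
and then feed these four coefficients into Graeffe's method (Lemma~\ref{Graeffe}) to bound the largest root $u_0$ of $P_n$ by $u_0\leq f(n)^{1/4}$, where $f(n)$ is the resulting explicit quartic expression in the coefficients. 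The key point is that the $n^2$ contributions from $-4a_{n-4}$ and $2a_{n-2}^2$ cancel, so $f(n)$ is \emph{affine} in $n$ with positive slope; hence $f(n)\leq f(2m+1)$, and $f(2m+1)$ simplifies \emph{exactly} to $d^4+4d^3-(8m-2)d^2+4d+8m^2+4m+1$. Lemma~\ref{rootbound} then gives $u_0\leq f(2m+1)^{1/4}<d-\tfrac{2m+1}{d+3}+1$, that is, $\lambda_2=u_0-1<d-\tfrac{2m+1}{d+3}$.

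I expect the coefficient extraction and its simplification to be the main obstacle: one must juggle $T_n$, $T_n/z$, $U_{n-1}$ and $zU_{n-1}$ simultaneously, and then verify that the Graeffe quartic collapses to precisely the polynomial appearing in Lemma~\ref{rootbound} — there is essentially no slack, which is both why the case $m=1$ (where the identical bound just barely fails) must be excluded and why Lemma~\ref{rootbound} carries the hypothesis $d\geq 2m+2\geq 6$. A minor additional point is confirming that the roots arising from $(2T_n(z))^{g-1}$ and from $(x+1)^{(d-2m)(2m+1)}$ all lie in $[-3,1]$ and so cannot be $\lambda_2$.
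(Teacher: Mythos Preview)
Your proposal is correct and follows essentially the same route as the paper: dispose of $m=1$ by citation, observe that the $(x+1)$- and $T_n$-factors have roots in $[-3,1]$ and that the $n=1$ factor is $x-d$, expand the remaining factor to five leading coefficients, apply Graeffe's method (Lemma~\ref{Graeffe}), note the resulting bound is affine and increasing in $n$ so one may take $n=2m+1$, and finish with Lemma~\ref{rootbound}. The only cosmetic difference is that you work in the variable $u=x+1$ whereas the paper works in $z=(x+1)/2$; after the obvious rescaling your coefficients and the final quartic $d^4+4d^3-(8m-2)d^2+4d+8m^2+4m+1$ coincide with the paper's.
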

    
    \begin{proof}
    See Section \ref{construction} for a discussion of the lower bound. Since $\mathcal{G}_{m,d}$ is a connected $d$-regular graph, it follows that $\lambda_2<\lambda_1=d$ \cite[Ch. 1]{brouwer2011spectra}. In the characteristic polynomial from Theorem \ref{charpoly}, consider when $j=2m+1$ in the product. The expression in the product simplifies to
    \[(x+1)^{2m}(x-d),\]
    from which the eigenvalue of $d$ arises. Also observe that the factors $(x+1)^{(d-2m)(2m+1)}$ and $(2T_n(\frac{x+1}{2}))^{g-1}$ have roots in $[-3,1]$. It is enough to check that the factor
    \[f_n(z)=\p*{-2m+1+\dfrac{2m-d}{z}} T_{n}(z) + (2m+1)(z-1)U_{n-1}(z)\]
    has roots less than $\frac12({d-\frac{2m+1}{d+3}+1})$ for $1\leq j< 2m+1$.
    
    First, let us note some restrictions on the values of $n,m,d$ for which we need to prove this. The case $m=1$ was proven in \cite{wongthesis}. Note that $n = (2m+1)/\gcd(2m+1,j)\ne 1$.  As a divisor of $2m+1$, $n$ must then be odd, which eliminates $n=2$. We are left to check $2m+1 \ge n \ge 3$ and $d \geq 2m+2$ for $m\geq 2$.

    By \eqref{chebytsum} and \eqref{chebyusum}, for $n\geq 4$
    \begin{align*}
        T_n(z)&=2^{n-1}z^n-2^{n-3}nz^{n-2}+2^{n-6}n(n-3)z^{n-4}+O(z^{n-6}),\\
        U_n(z)&=2^nz^n-2^{n-2}(n-1)z^{n-2}+2^{n-5}(n-2)(n-3)z^{n-4}+O(z^{n-6}).
    \end{align*}
    Then for $n\geq 4$,
    \begin{align*}
    f_n(z)&= \p*{-2m+1+\dfrac{2m-d}{z}} T_{n}(z) + (2m+1)(z-1)U_{n-1}(z) \\
    &= \p*{-2m+1+\dfrac{2m-d}{z}} (2^{n-1}z^n-2^{n-3}nz^{n-2}+2^{n-6}n(n-3)z^{n-4}+O(z^{n-6}))\\
    &\quad +(2m+1)(z-1)(2^{n-1}z^{n-1}-2^{n-3}(n-2)z^{n-3}+2^{n-6}(n-3)(n-4)z^{n-5}+O(z^{n-6}))
    \end{align*}
    so that the first five coefficients of the characteristic polynomial are explicitly
    \begin{align}
    \dfrac{f_n(z)}{2^n}    &=z^n-\dfrac{d+1}{2}z^{n-1}+\dfrac{2m+1-n}{4}z^{n-2}+\dfrac{dn+n-4m-2}{8}z^{n-3} \label{fullpoly}\\
        &\quad +\dfrac{(n-4m-2)(n-3)}{32}z^{n-4}+O(z^{n-5}).  \nonumber
    \end{align}

    We examine the behavior at $n=3$, where \eqref{fullpoly} is technically undefined. Setting $n=3$ there, the $z^{n-4}$ factor vanishes due to the $(n-3)$ term, and we recover the correct polynomial. Therefore, we can extend the validity of this statement to $n\geq 3$.

    By applying Graeffe's method (Lemma \ref{Graeffe}) with
    \begin{align*}
        a_{n-1} &= -\dfrac{d+1}{2},\\ a_{n-2}&=\dfrac{2m+1-n}{4},\\ a_{n-3} &= \dfrac{dn+n-4m-2}{8}, \\a_{n-4} &= \dfrac{(n-4m-2)(n-3)}{32},
    \end{align*}
    the largest root $z_0$ of $f_n(z)$ satisfies
    \begin{equation}\label{root1}
    z_0\leq \dfrac{1}{2}\p*{d^4+4d^3-(8m-2)d^2+4d+8m^2-8m+6n-5}^{1/4}.
    \end{equation}
    Note that for fixed $m$ and $d$, this function is increasing in $n$ due to the $6n$ term, so it suffices to take $n=2m+1$ (the maximum possible value of $n$) in \eqref{root1}. Then \eqref{root1} reduces to
    \begin{equation}\label{root2}
    z_0\leq \dfrac{1}{2}\p*{d^4+4d^3-(8m-2)d^2+4d+8m^2+4m+1}^{1/4}.
    \end{equation}
    Lemma \ref{rootbound} shows that this is less than $\frac12({d-\frac{2m+1}{d+3}+1})$ for $d\geq 2m+2 \ge 6$. 
    \end{proof}
    
    \section{Application to Rigidity}\label{rigidity}
    One interesting application of this graph family pertains to graph rigidity. Rigidity is a well-studied notion of resistance to bending. A \textit{$k$-dimensional} framework is a pair $(G,p)$, where $G$ is a graph and $p$ is a map from $V(G)$ to $\mathbb{R}^k$. Let $||\cdot||$ denote the Euclidean norm in $\mathbb{R}^k$. Two frameworks $(G,p)$ and $(G,q)$ are \textit{equivalent} if $||p(u) - p(v)|| = ||q(u) - q(v)||$ for every edge $uv \in E(G)$, and \textit{congruent} if $||p(u) - p(v)|| = ||q(u) - q(v)||$ for every $u, v \in V(G)$. Further, a framework $(G,p)$ is \textit{generic} if the coordinates of its points are algebraically independent over the rationals. Finally, a framework $(G,p)$ is \textit{rigid} if there exists an $\varepsilon > 0$ such that if $(G,p)$ is equivalent to $(G,q)$ and $||p(u)-q(u)|| < \varepsilon$ for every $u \in V(G)$, then $(G,p)$ is congruent to $(G,q)$. Note that we can consider rigidity as a property of the underlying graph, as a generic realization of $G$ is rigid in $\mathbb{R}^k$ if and only if every generic realization of $G$ is rigid in $\mathbb{R}^k$ \cite{asimow1978rigidity}. Thus, we call a graph \textit{rigid} in $\mathbb{R}^k$ if every generic realization of $G$ is rigid in $\mathbb{R}^k$. For the remainder of the section, we only consider rigid graphs in $\mathbb{R}^2.$
    
    Cioab\u{a}, Dewar, and Gu investigated the connection between a graph's Laplacian eigenvalues and rigidity, ultimately arriving at the following sufficient spectral condition.
    \begin{thm}[\cite{cioabua2020spectral}]\label{rigidcriteria}
    Let $G$ be a graph with minimum degree $\delta(G) \ge 6r$. If
    \begin{enumerate}
        \item $\mu_2(G) > \frac{6r - 1}{\delta(G) + 1}$,
        \item $\mu_2(G - u) > \frac{4r-1}{\delta(G-u) + 1}$ for every $u \in V(G)$, and 
        \item $\mu_2(G - v - w) > \frac{2r-1}{\delta(G - v - w) + 1}$ for every $v, w \in V(G)$,
    \end{enumerate}
    then $G$ contains at least $r$ edge-disjoint spanning rigid subgraphs.
    \end{thm}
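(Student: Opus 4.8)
\emph{Proof proposal.} The plan is to reduce Theorem~\ref{rigidcriteria} to a purely combinatorial statement about packing spanning trees in $G$ and in its one- and two-vertex deletions, and then to invoke Theorem~\ref{lambda2bound} three times. The combinatorial engine I would use is: \emph{if $\sigma(G)\ge 3r$, $\sigma(G-u)\ge 2r$ for every $u\in V(G)$, and $\sigma(G-v-w)\ge r$ for every pair $v,w\in V(G)$, then $G$ contains $r$ edge-disjoint spanning rigid subgraphs.} Granting this, the theorem follows quickly. Apply the spanning-tree criterion (the Laplacian form of Theorem~\ref{lambda2bound}; for regular $G$ this is immediate from $\lambda_2\le\Delta-\mu_2$) to $G$ with $m=3r-1$: condition~(1) is exactly $\mu_2(G)>\tfrac{2m+1}{\delta(G)+1}$ and $\delta(G)\ge 6r=2m+2$, so $\sigma(G)\ge m+1=3r$. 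Apply it to each $G-u$ with $m=2r-1$: here $\delta(G-u)\ge\delta(G)-1\ge 6r-1\ge 4r=2m+2$ and condition~(2) is the required eigenvalue bound, so $\sigma(G-u)\ge 2r$. Apply it to each $G-v-w$ with $m=r-1$: when $r\ge 2$ we have $\delta(G-v-w)\ge\delta(G)-2\ge 6r-2\ge 2r=2m+2$ and condition~(3) gives $\sigma(G-v-w)\ge r$, while for $r=1$ condition~(3) only forces $\mu_2(G-v-w)>0$, hence connectivity, i.e.\ $\sigma(G-v-w)\ge 1$. These are precisely the hypotheses of the engine.

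For the engine itself I would work with the generic two-dimensional rigidity matroid $\mathcal{R}_2(G)$, recalling that a graph is rigid in $\mathbb{R}^2$ exactly when $\operatorname{rank}\mathcal{R}_2(G)=2|V(G)|-3$ (it then contains a spanning Laman subgraph), and that $G$ has $r$ edge-disjoint spanning rigid subgraphs iff $\mathcal{R}_2(G)$ has $r$ disjoint bases. By Edmonds' matroid-union theorem the latter is equivalent to $|E(G)\setminus S|+r\operatorname{rank}\mathcal{R}_2(S)\ge r(2|V(G)|-3)$ for all $S\subseteq E(G)$, and by the Lovász--Yemini min-max formula $\operatorname{rank}\mathcal{R}_2(S)$ equals a minimum over covers of $S$ of a sum of terms $2|V_i|-3$. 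The shortfall of such a cover is controlled by how sparsely $G$ (resp.\ a one- or two-vertex deletion of $G$) can be cut, which by Theorem~\ref{nash} is exactly what $\sigma(G)\ge 3r$, $\sigma(G-u)\ge 2r$, and $\sigma(G-v-w)\ge r$ rule out; moreover the $3r(|V(G)|-1)$ edges supplied by the spanning trees of $G$ comfortably exceed the $r(2|V(G)|-3)$ an $r$-fold packing needs. One then either builds the $r$ rigid subgraphs greedily, using a Nash-Williams/Tutte exchange (Theorem~\ref{nash}) to keep the leftover edge set rich after each extraction, or verifies the matroid-union inequality directly, cover by cover.

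I expect this combinatorial engine to be the crux. The delicate part is the bookkeeping of the additive constants---$-3$ per cover part for one rigid subgraph, and $-3r$ in aggregate for $r$ of them---against the spanning-tree counts, together with showing that covers whose parts all have size $\ge 3$ never bind; this is exactly where the coefficients $6r,4r,2r$ (equivalently $3r,2r,r$ trees) are forced, and getting even one of them wrong breaks the argument. By contrast the reduction step is routine: once the precise (Laplacian) form of Theorem~\ref{lambda2bound} is fixed, the only care needed is the $r=1$ corner of condition~(3) and checking that the minimum-degree thresholds $2m+2$ survive deleting one or two vertices, both of which are immediate from $\delta(G)\ge 6r$. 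Finally, I would remark that the coefficients cannot be improved: the family $\mathcal{G}_{m,d}$ constructed above already shows Theorem~\ref{lambda2bound} is essentially tight, so through this reduction any strengthening of the constants in Theorem~\ref{rigidcriteria} would contradict the near-extremality of $\mathcal{G}_{m,d}$---which is the content we pursue in the remainder of this section.
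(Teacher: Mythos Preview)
The paper does not prove Theorem~\ref{rigidcriteria} at all: it is quoted verbatim from Cioab\u{a}, Dewar, and Gu~\cite{cioabua2020spectral} and used as a black box, so there is no ``paper's own proof'' to compare your proposal against. In this manuscript the theorem serves only as background for the tightness application; the authors' contribution in Section~\ref{rigidity} is the short argument that $\mathcal{G}_{3r-1,d}$ witnesses near-sharpness of condition~(1), not a proof of Theorem~\ref{rigidcriteria} itself.

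That said, your sketch is a reasonable outline of how results of this type are actually established in~\cite{cioabua2020spectral}: one converts the three Laplacian eigenvalue hypotheses into spanning-tree packing numbers for $G$, $G-u$, and $G-v-w$ via a Laplacian analogue of Theorem~\ref{lambda2bound}, and then feeds those tree counts into a combinatorial rigidity criterion. Two caveats if you intend to flesh this out. First, Theorem~\ref{lambda2bound} as stated here is an \emph{adjacency} eigenvalue bound, and $\mu_2=d-\lambda_2$ only for regular graphs; for general $G$ you need the genuine Laplacian version (also proved in~\cite{noteOnTrees2014}), not merely the observation ``$\lambda_2\le\Delta-\mu_2$''. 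Second, the ``engine'' you describe---that $\sigma(G)\ge 3r$, $\sigma(G-u)\ge 2r$, $\sigma(G-v-w)\ge r$ force $r$ edge-disjoint spanning rigid subgraphs---is the substantive content of the Cioab\u{a}--Dewar--Gu argument and is not a one-line consequence of Edmonds plus Lov\'asz--Yemini; the cover bookkeeping you allude to is exactly where the work lies, and your proposal stops short of carrying it out.
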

    Moreover, they used a variant of $\mathcal{G}_{2,d}$ and a result by Lov\'{a}sz and Yemini \cite{lovaszrigid} to show that the first condition of the above theorem essentially cannot be improved for $r = 1$. However, this categorization does not extend beyond $r=1$; we must turn to a more recent result by Gu \cite{gu2018spanningrigit} to verify tightness for larger values.
    
    Suppose $\pi$ is a partition of $V(G)$. We call a part of $\pi$ \textit{trivial} if it consists of a single vertex. Additionally, let $e_G(\pi)$ denote the number of edges of $G$ whose endpoints lie in two different parts of $\pi$.
    
    \begin{thm}[\cite{gu2018spanningrigit}]\label{necessaryrigitcondition}
    Let $r, \ell \ge 0$ be integers. If a graph $G$ contains $r$ spanning rigid subgraphs and $\ell$ spanning trees, all of which are mutually edge-disjoint, then for any partition $\pi$ of $V(G)$ with $t$ trivial parts, $$e_G(\pi) \ge (3r+\ell)(|\pi|-1) - rt.$$
    \end{thm}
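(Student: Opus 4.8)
The plan is to pass from each spanning rigid subgraph to a minimally rigid (Laman) spanning subgraph, and then to bound the number of edges crossing $\pi$ one subgraph at a time, summing the bounds. Write $\pi = \{V_1,\dots,V_k\}$ with $k = |\pi|$, exactly $t$ of the parts being trivial, and set $n = |V(G)|$. Let $R_1,\dots,R_r$ be the given edge-disjoint spanning rigid subgraphs and $T_1,\dots,T_\ell$ the given edge-disjoint spanning trees; since all $r+\ell$ of these subgraphs are pairwise edge-disjoint, every edge of $G$ crossing $\pi$ is counted in at most one of them, so $e_G(\pi) \ge \sum_{i=1}^r e_{R_i}(\pi) + \sum_{j=1}^\ell e_{T_j}(\pi)$. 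It therefore suffices to show $e_{R_i}(\pi) \ge 3(k-1) - t$ for each $i$ and $e_{T_j}(\pi) \ge k-1$ for each $j$.

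For the rigid subgraphs I would invoke the combinatorial characterization of generic $2$-dimensional rigidity (Laman's theorem): a graph rigid in $\mathbb{R}^2$ on $n \ge 2$ vertices has a spanning subgraph $L$ with exactly $2n-3$ edges such that every set of $n' \ge 2$ vertices induces at most $2n'-3$ edges of $L$, i.e.\ a basis of the generic rigidity matroid. Such an $L$ has no isolated vertex, hence is genuinely vertex-spanning, and deleting edges only decreases $e(\pi)$, so we may replace $R_i$ by a Laman subgraph $L_i \subseteq R_i$, and it is enough to prove the bound for Laman graphs.

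The remaining step is a short double count. For a Laman graph $L$ on $V(G)$, the edges of $L$ lying inside parts of $\pi$ satisfy $\sum_{s=1}^k e(L[V_s]) \le \sum_{|V_s|\ge 2}(2|V_s|-3) = 2(n-t) - 3(k-t) = 2n + t - 3k$, since the $k-t$ non-trivial parts together contain $n-t$ vertices and a trivial part induces no edge; hence $e_L(\pi) = |E(L)| - \sum_s e(L[V_s]) \ge (2n-3) - (2n+t-3k) = 3(k-1) - t$. The same argument applied to a spanning tree $T_j$, whose restriction to each $V_s$ is a forest with at most $|V_s|-1$ edges, gives $\sum_s e(T_j[V_s]) \le n - k$ and so $e_{T_j}(\pi) \ge (n-1) - (n-k) = k-1$. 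Combining, $e_G(\pi) \ge r(3(k-1)-t) + \ell(k-1) = (3r+\ell)(|\pi|-1) - rt$. The degenerate cases $n \le 2$ (hence $r+\ell \le 1$) and $k = 1$ (hence $t \in \{0,1\}$) are immediate.

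The main obstacle is the rigidity-theoretic input in the middle paragraph: that a rigid planar framework graph admits a spanning $(2,3)$-tight subgraph. This is exactly Laman's theorem together with the elementary fact that a $(2,3)$-tight graph on at least two vertices is connected and touches every vertex; one must also be slightly careful that ``rigid subgraph'' means a spanning subgraph all of whose generic realizations are rigid, so that its generic $2$-rigidity matroid has rank $2|V(G)|-3$ and a basis of that size exists. Once this is in hand, everything else is the two-line counting above, directly parallel to the Nash-Williams/Tutte-style argument for spanning trees.
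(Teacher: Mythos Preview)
The paper does not prove this theorem; it is quoted from \cite{gu2018spanningrigit} and used as a black box. Your sketch supplies a correct proof: the reduction to a Laman spanning subgraph via the generic $2$-rigidity characterization, followed by the $(2,3)$-sparsity count $\sum_{|V_s|\ge 2}(2|V_s|-3)=2n+t-3k$ inside the parts, is exactly the natural argument and matches the approach in Gu's original paper. The only point worth flagging is cosmetic: ``rigid planar framework graph'' in your last paragraph should read ``graph rigid in $\mathbb{R}^2$'' (planarity plays no role), and you might want to state explicitly that $e_{R_i}(\pi)\ge e_{L_i}(\pi)$ holds because $L_i\subseteq R_i$ as edge sets, which you use implicitly.
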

    
    Using this result, we are able to prove the first condition of Theorem \ref{rigidcriteria} is essentially tight for all $r \ge 1$.
    
    \begin{thm}
    Let $r > 0$ be an integer. Then $\mathcal{G}_{3r-1,d}$ has fewer than $r$ edge-disjoint spanning rigid subgraphs and satisfies
    \[
    \frac{6r-1}{d+3} < \mu_2(\mathcal{G}_{3r-1,d}) \le \frac{6r-1}{d+1}.
    \]
    \end{thm}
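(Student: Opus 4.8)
The plan is to specialize the construction by taking $m = 3r - 1$, so that $2m + 1 = 6r - 1$, and then read off both conclusions from results already in hand. For the spectral half, note that $d \ge 6r = 2m + 2 \ge 4$ (this degree bound is exactly what is needed for $\mathcal{G}_{3r-1,d}$ to be defined), so Theorem \ref{extremegraph} applies and gives $d - \frac{6r-1}{d+1} \le \lambda_2(\mathcal{G}_{3r-1,d}) < d - \frac{6r-1}{d+3}$. Since $\mathcal{G}_{3r-1,d}$ is $d$-regular we have $\mu_2 = d - \lambda_2$; subtracting each term in this chain from $d$ and reversing the inequalities yields $\frac{6r-1}{d+3} < \mu_2(\mathcal{G}_{3r-1,d}) \le \frac{6r-1}{d+1}$, which is the spectral half of the claim.

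For the rigidity half, I would argue by contradiction using the Nash-Williams/Tutte-style count of Theorem \ref{necessaryrigitcondition}. Suppose $\mathcal{G}_{3r-1,d}$ contained $r$ pairwise edge-disjoint spanning rigid subgraphs. Apply Theorem \ref{necessaryrigitcondition} with this $r$, with $\ell = 0$ (permitted, since $\ell \ge 0$), and with $\pi$ the partition of $V(\mathcal{G}_{3r-1,d})$ into the $2m+1 = 6r-1$ modified cliques $H_0, \dots, H_{2m}$. Each $H_i$ has $d + 1 > 1$ vertices, so $\pi$ has no trivial parts ($t = 0$) and $|\pi| = 6r - 1$. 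As computed in Section \ref{trees}, $e(H_i, H_j) = 1$ for $i \ne j$, so $e_G(\pi) = \binom{2m+1}{2} = m(2m+1) = (3r-1)(6r-1)$. Theorem \ref{necessaryrigitcondition} would then force $(3r-1)(6r-1) \ge 3r\bigl((6r-1) - 1\bigr) = 6r(3r-1)$; dividing by $3r - 1 > 0$ gives $6r - 1 \ge 6r$, a contradiction. Hence $\mathcal{G}_{3r-1,d}$ has fewer than $r$ edge-disjoint spanning rigid subgraphs, and the theorem follows.

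There is no genuine obstacle here beyond choosing the specialization correctly: setting $m = 3r-1$ is precisely what makes $2m+1$ equal the numerator $6r - 1$ appearing in the first hypothesis of Theorem \ref{rigidcriteria}, and reusing the clique partition from the spanning-tree argument of Section \ref{trees} makes the inequality in Theorem \ref{necessaryrigitcondition} fail by exactly one edge per clique. The remaining points to verify are routine: the degree restriction $d \ge 6r$, the regular-graph identity $\mu_2 = d - \lambda_2$, and the elementary inequality $\binom{2m+1}{2} < 3r \cdot 2m$ for all $r \ge 1$. Together this shows the first condition of Theorem \ref{rigidcriteria} cannot be weakened for any $r \ge 1$.
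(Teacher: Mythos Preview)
Your proof is correct and follows essentially the same approach as the paper: both specialize to $m=3r-1$, deduce the spectral bounds from Theorem~\ref{extremegraph} via $\mu_2=d-\lambda_2$, and rule out $r$ edge-disjoint spanning rigid subgraphs by applying Theorem~\ref{necessaryrigitcondition} (with $\ell=0$, $t=0$) to the partition into the $6r-1$ modified cliques, where $e_G(\pi)=(3r-1)(6r-1)<3r(6r-2)$. The only cosmetic difference is that you phrase the rigidity step as a contradiction and divide through by $3r-1$, whereas the paper writes the strict inequality directly.
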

    
    \begin{proof}
    Let $\pi$ be a partition of $V(\mathcal{G}_{3r-1, d})$, where the vertices of each modified $K_{d+1}$ form a part. Then $|\pi| = 2(3r-1)+1 = 6r-1$ and $e_{\mathcal{G}_{3r-1, d}}(\pi) = (3r-1)(2(3r-1)+1) = (3r-1)(6r-1)$. Thus,
    \[
    e_{\mathcal{G}_{3r-1, d}}(\pi) = (3r-1)(6r-1) < 3r(6r-2) \le (3r+\ell)((6r - 1)-1)
    \]
    for $\ell \ge 0$, so $\mathcal{G}_{3r-1, d}$ cannot contain $r$ edge-disjoint spanning rigid subgraphs by Theorem \ref{necessaryrigitcondition}.
    
    The spectral inequality is an immediate consequence of Theorem \ref{extremegraph} by recognizing that $\mu_2 = d - \lambda_2$, since the graph is $d$-regular.
    \end{proof}
    
    \section{Open Problem}
    Numerically, $\lambda_2(\mathcal{G}_{m,d})$ converges to the upper bound $d-\frac{2m+1}{d+3}$ from Theorem \ref{extremegraph}. It is natural to ask the following: what is the optimal function $g(m,d)$ so that if a $d$-regular graph $\mathcal{G}$ has $\lambda_2(\mathcal{G})\leq g(m,d)$, then $G$ contains at least $m+1$ edge-disjoint spanning trees, and what are the extremal graphs?
    
    
    \section{Acknowledgements}
    The researach of Sebastian M. Cioab\u{a} was partially supported by NSF grants DMS-1600768 and CIF-1815922.
    Tanay Wakhare was supported by the MIT Television and Signal Processing Fellowship.

    \bibliographystyle{plain}
    \bibliography{bibliography}
    \end{document}